\documentclass[a4paper, 12pt]{amsart}

\usepackage[french, english]{babel}
\usepackage[normalem]{ ulem }
\usepackage{soul}

\usepackage[T1]{fontenc}
\usepackage[utf8]{inputenc}
\usepackage{amsmath,amsfonts,amssymb,amsopn,amscd,amsthm}
\usepackage{gensymb}
\usepackage{comment}
\usepackage{dsfont}
\usepackage{bm}
\usepackage{graphicx}
\usepackage{color}
\usepackage[colorlinks]{hyperref}
\usepackage{epigraph}
\usepackage{enumerate}
\usepackage{xargs}
\usepackage[prependcaption]{todonotes}

\setlength{\hoffset}{0cm}
\setlength{\textwidth}{16cm}
\setlength{\voffset}{0cm}
\setlength{\textheight}{25cm}
\setlength{\oddsidemargin}{0cm}
\setlength{\evensidemargin}{0cm}

\usepackage{marginnote}
\usepackage{todonotes}
\usepackage[left=3.5cm,top=2.5cm,bottom=2cm,right=4cm, marginparwidth=3cm]{geometry}

\title[ Spiking and collapsing in large noise limits of SDEs]
      { Spiking and collapsing in large noise limits of SDEs}
      
\author[Bernardin]{\textsc{C\'edric Bernardin}} 
\address{Universit\'e C\^ote d'Azur, CNRS, LJAD\\
Parc Valrose\\
06108 NICE Cedex 02, France}
\email{{\tt cbernard@unice.fr}}

\author[Chetrite]{\textsc{Rapha\"el Chetrite}}
\address{Universit\'e C\^ote d'Azur, CNRS, LJAD\\
Parc Valrose\\
06108 NICE Cedex 02, France}
\email{{\tt raphael.chetrite@unice.fr}}
      
\author[Chhaibi]{\textsc{Reda Chhaibi}}
\address{Universit\'e Paul Sabatier, Toulouse 3 -- Institut de math\'ematiques de Toulouse (IMT) -- 118, route de Narbonne, 31400, Toulouse, France}
\email{{\tt reda.chhaibi@math.univ-toulouse.fr}}

\author[Najnudel]{\textsc{Joseph Najnudel}} 
\address{University of Bristol -- University Walk, Clifton, Bristol, United Kingdom}
\email{{\tt joseph.najnudel@bristol.ac.uk}}

\author[Pellegrini]{\textsc{Cl\'ement Pellegrini}}
\address{Universit\'e Paul Sabatier, Toulouse 3 -- Institut de math\'ematiques de Toulouse (IMT) -- 118, route de Narbonne, 31400, Toulouse, France}
\email{{\tt clement.pellegrini@math.univ-toulouse.fr}}

\date{\today}

\allowdisplaybreaks[4]


\DeclareMathOperator{\cotan}{cotan}



\DeclareMathOperator{\eqlaw}{\stackrel{\Lc}{=}}


\def\half{\frac{1}{2}}

\def\1{{\mathbf 1}}

\def\B{{\mathbb B}}
\def\N{{\mathbb N}}
\def\Z{{\mathbb Z}}

\def\R{{\mathbb R}}

\def\H{\mathbb{H}}
\def\P{{\mathbb P}}
\def\E{{\mathbb E}}

\def\X{{\mathbb X}}


\def\Lc{{\mathcal L}}

\def\Oc{{\mathcal O}}
\def\Pc{{\mathcal P}}

\def\xb{{\mathbf x}}


\setlength{\footskip}{2cm}

\newtheorem{thm}{Theorem}[section]
\newtheorem{proposition}[thm]{Proposition}

\newtheorem{question}[thm]{Question}

\newtheorem{lemma}[thm]{Lemma}

\newtheorem{rmk}[thm]{Remark}
\newtheorem{assumption}[thm]{Assumption}

\numberwithin{equation}{section}
\numberwithin{figure}{section}

\begin{document}

\begin{abstract}
We analyze the strong noise limit of one-dimensional stochastic differential equations (SDEs). 

Our initial motivation comes from continuous measurements of open quantum systems. In this context, Bauer, Bernard and Tilloy pointed out an intriguing behavior. As the noise grows larger, the solutions exhibit locally a collapsing, that is to say, converge to pure jump processes very reminiscent of a metastability phenomenon. But surprisingly the limiting jump process is decorated by a spike process.

We give a precise meaning to the convergence and completely prove these statements for a large class of one-dimensional diffusions, thanks to a robust strategy of proof.
\end{abstract}

\keywords{Limit theorems for stochastic processes, Quantum measurement, Quantum collapse, Large noise limits, Spike process}
\renewcommand{\subjclassname}{%
  \textup{2010} Mathematics Subject Classification}
\subjclass[2010]{Primary 60F99; Secondary 60G60, 81P15}

\maketitle

\setcounter{tocdepth}{3}
\medskip
\hrule
\tableofcontents
\hrule
\newpage

\section{Introduction and motivations}

We study one-dimensional diffusions
\begin{align}
\label{eq:base}
\left\{
\begin{array}{cl}
X_0^\gamma  = \ & x_0 \ , \\
dX^\gamma_t = \ & b(X^\gamma_t)dt+\sqrt{\gamma}\sigma(X^\gamma_t)dW_t \ .
\end{array}
\right.
\end{align}
where $W$ is a standard Wiener process and $b, \sigma$ are smooth functions. Throughout the paper, we assume the Itô convention. Contrary to the usual weak noise limit ($\gamma \rightarrow 0$), developed in the so-called Freidlin-Wentzell theory \cite{FW12}, we are interested in the regime where the parameter $\gamma$ goes to infinity. Our initial motivation comes from continuous quantum measurements and, as such, let us start by the guiding example which inspired us. Only then we will present the more general setting.

\medskip

{\bf Guiding example from quantum measurements:} Following Bauer, Bernard and Tilloy \cite{BD14-JP, BBT15, TBB15, BBT16}, we consider a quantum system with two energy levels $\{E_0, \ E_1\}$, a.k.a. a ``qubit'', in a thermal bath at temperature $\beta^{-1}$ and subject to continuous indirect measurements \cite{HR06, WM10} of the energy with intensity $\gamma>0$. The indirect nature of these measurements prevents the complete wave-function collapse which occurs in a direct measurement, according to the principles of quantum mechanics \cite[Section 3.6, Axiom 4]{H13}. Let us then denote by $X^\gamma_t$ the probability of measuring the energy $E_0$ at time $t$, upon a {\it hypothetical} direct measurement. The process $X^\gamma = \left( X_t^\gamma \ ; \ t \geq 0 \right)$ solves the SDE:
\begin{align}
\label{eq:BBT_SDE}
dX_t^\gamma = & -\lambda( X_t^\gamma - p ) dt + \sqrt{\gamma} X_t^\gamma \left( 1 - X_t^\gamma \right)dW_t \ .
\end{align}
In this context, the large $\gamma$ limit corresponds to the strong measurement regime. Furthermore, $\lambda>0$ is the coupling strength with the thermal bath and $p = \frac{e^{-\beta E_0}}{e^{-\beta E_0}+e^{-\beta E_1}}$ is the probability of being at the energy level $E_0$ according to a Gibbs measure. Heuristically, Eq. \eqref{eq:BBT_SDE} expresses a competition between the drift term favoring a convergence towards $p$ and the stochastic term favoring an absorption in $\{0,1\}$. In physical jargon, one says that there is a competition between thermalization and collapsing.

\begin{figure}[ht]
\includegraphics[scale=0.5]{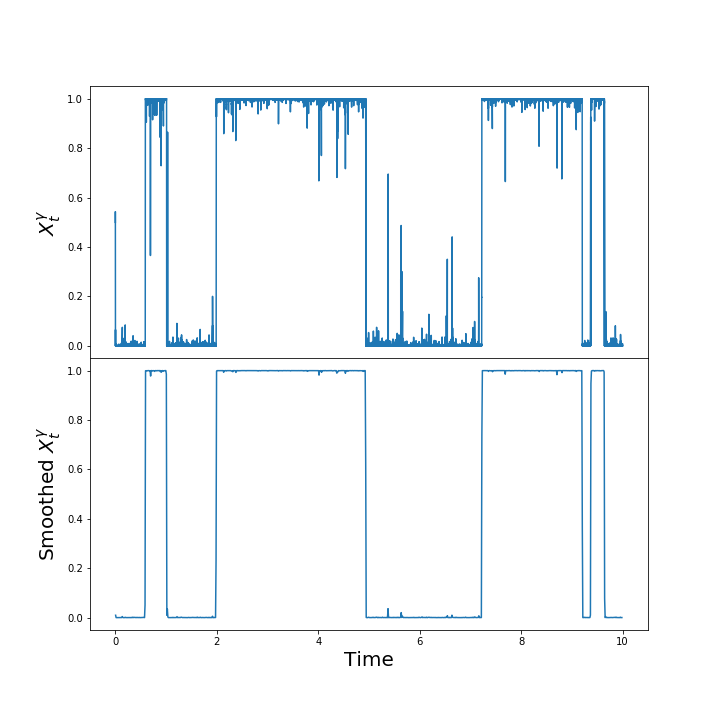}
\caption{Numerical simulation of the process $\left( X_t^\gamma; t \geq 0 \right)$ and its smoothing for $\gamma = 10^4$. Parameters are $\lambda=1.0$ and $p=0.5$. There are $10^6$ time steps. Smoothing is via averaging over 1000 steps. The code is available at the online repository 
\url{https://github.com/redachhaibi/quantumCollapse}}
\label{fig:twoState}
\end{figure}

An effective simulation at large $\gamma$ of the solution to this SDE is given in Figure \ref{fig:twoState}. From the figure, one observes:
\begin{itemize}
\item[(1)] what amounts to a jump process on $\{0,1\}$. Most of the time, $X^\gamma$ lives on a thin layer around these points where the noise vanishes.
\item[(2)] there is a decoration by spikes. These spikes are very thin since smoothing via convolution blurs them completely.
\end{itemize}

\medskip

{\bf The problem:}
Regarding the first aspect, the convergence of $X^\gamma$ to a Markov jump process has been addressed during the last years \cite{BD14-JP,BBT15,BCFFS17} and holds at the level of semi-groups. We refer to this phenomenon as a {\textit{local collapse}}. From a physical point of view, this is a metastable situation caused by the aforementioned competition between thermalization and collapsing. Let us also mention the paper \cite{KL18}, where the authors prove for specific SDEs the approximation by a Markov jump process via the study of hitting times and their asymptotics in $\gamma \rightarrow \infty$.

The second aspect is more surprising and much less understood. It was first described in \cite{TBB15,BBT16} and then studied in greater depth in \cite{BB18}. In fact, fluctuations around the local collapse {\it do} persist in the strong noise limit and take the form of ``spikes'' decorating the Markov jump process. So far, there is only a limited understanding of the convergence topology and the precise statistics of these spikes.

A general approach developed in \cite{BBT16, BB18} concerns a change of time (a zooming) which allows to consider the presence of spikes. More precisely, the spikes are explained in terms of excursions of a reflected Brownian motion which appears in the strong noise limit. In particular, in order to obtain their result, the authors of \cite{BB18} prove an effective approximate version of the Skorohod Lemma. 

From this body of literature arise the following questions:
\begin{question}
\ 
\begin{itemize}
\item How to formalize the limiting phenomena? At this point, the precise mathematical nature of the "spike process" is unclear. Even the name "process" is unwarranted for now.
\item Is there a limit theorem for the process $X^\gamma$ as $\gamma \rightarrow \infty$? A satisfying answer should be two-fold. On the one hand, we need the convergence to a Markov jump process, which holds only upon smoothing. On the other hand, the convergence to the spike process needs to happen in a non-standard topology, which we need to describe.
\end{itemize}
\end{question}

In the general setup of \eqref{eq:base}, the goal of this paper is three-fold 
\begin{itemize}
\item we provide a precise statement for the convergence process $X^\gamma$, so that both the Markov jump process and the spike process are obtained in the limit.
\item we formalize the convergence topology as a Hausdorff convergence of graphs, in order to capture spikes.
\item we give a parsimonious description of the law of the limiting processes.
\end{itemize}

\medskip

{\bf The results:} 
The previous papers \cite{BB18, KL18} perform a rigorous study only after an uncontrolled perturbative analysis around one of the stable points i.e. points where the noise vanishes. Thus such papers restrict themselves to SDEs living in $[0,\infty)$ with particular coefficients which satisfy
\begin{equation*}
\quad \sigma(x)>0 \quad {\rm{for\;  all\; }} x\in (0,\infty), \quad \sigma (0)=0, \quad b (0)>0.
\end{equation*}
This does not cover the two-boundary case such as Eq. \eqref{eq:BBT_SDE}, relevant for quantum mechanics. Moreover, a precise statement describing the spikes has been missing.

In the present paper, we do not perform any approximation, treat generic coefficients and give a precise description of the spike process. We provide a general technique to study the strong noise limit $\gamma \to \infty$ of one-dimensional SDEs with two possible setups. In the first half of the paper, we have the following working hypotheses:
\begin{assumption}
\label{assumption:main}
We assume that the  drift term $b$ and the diffusion coefficient $\sigma$ are Lipschitz continuous so that the SDE \eqref{eq:base} admits strong solutions. Moreover
\begin{align*}
& \sigma(x)>0 \quad {\rm{for\;  all\; }} x\in (0,1), \quad \sigma (0)=\sigma(1)=0 \ ,\\
& b (0)>0,\quad  b (1)<0 \ .
\nonumber
\end{align*}

Recalling that $x_0$ denotes the initial position, we naturally consider $x_0 \in (0,1)$: the starting point needs to be between points where the noise vanishes.
\end{assumption}

In fact, such assumptions can be slightly relaxed, as long as the SDE continues to have strong solutions and that the points $\{0,1\}$ are natural boundaries in the sense of Gilman-Skorohod - see \cite[Section 6.9]{K05} for a definition and references. However, our method of proof still needs $b(0)>0$ and $b(1)<0$.

Our main results in this setup are provided in Theorem \ref{thm:limit_thm_1d}. It shows first the convergence of the process $(X_t^\gamma\, ;\, t\ge 0)$ to a jump Markov process $(\xb_t\, ;\, t\ge 0)$ as $\gamma \to \infty$.  A reader used to problems of weak convergence of stochastic processes will notice that the previous convergence cannot hold in the usual Skorohod topology since $(X_t^\gamma\, ; \, t\ge 0)$ has continuous paths while $(\xb_t\, ;\, t\ge 0)$ has only c\`adl\`ag trajectories. The statement holds only upon smoothing, which is equivalent to the convergence of semi-groups. Hence the precise statement is that for every compactly supported continuous function $f$ of time and space
 $$\lim_{\gamma \to \infty} \int_0^\infty f(t, X_t^\gamma) \,  dt \; =\; \int_0^\infty f(t, \xb_t)\,  dt \quad \P - {\rm{a.s.}}$$
Almost sure convergence is due to a particular coupling of $X^\gamma$ for different $\gamma$. 
 
The previous convergence does not detect the spikes that are observed in the numerical simulation given in Fig. \ref{fig:twoState}. Therefore, in order to mathematically capture them, we have to find the right topology. Our solution uses the Hausdorff metric on the graphs of functions. Indeed the second part of our theorem establishes the convergence of $(X_t^\gamma \, ;\, t\ge 0)$ to a spike process defined in terms of excursions. Within this approach, we obtain the complete picture of Markov jump processes with spikes and we make precise the statistics of the involved processes. 

\medskip

In the second half of the paper, we chose another setup focusing on an important equation also coming from quantum continuous measurements. This setup is related to the measurement of Rabi oscillations, where the drift $b$ depends on the parameter $\gamma$ and where the boundaries have different behaviors. In the large $\gamma$ regime, markedly different limiting processes appear compared the two boundary case covered by Assumptions \ref{assumption:main}. This example is rich enough to illustrate the robustness of our approach, without attempting a complete description of large noise limits for all one-dimensional diffusions. 

\section{Main setup and associated statements}

\subsection{Two limiting processes}
We start by defining the two processes which shall appear in the main theorem.

On the one hand, we define $\left( \xb_t \ ; \ t \geq 0 \right)$ as the continuous time (càdlàg) Markov process with state space $\{0, 1\}$ and jump rates $W$:
$$ W^{0,1} = \left|b(0)\right| , \quad \quad W^{1,0} = \left|b(1)\right|\ .$$
Here, we wrote $W^{i,j}$ for the jumping rate from state $i$ to state $j$. The initial position is sampled according to
$$ \P\left( \xb_0 = 1 \right) = 1-\P\left( \xb_0 = 0 \right) = x_0 \ .$$

\begin{figure}[ht]
\includegraphics[scale=0.4]{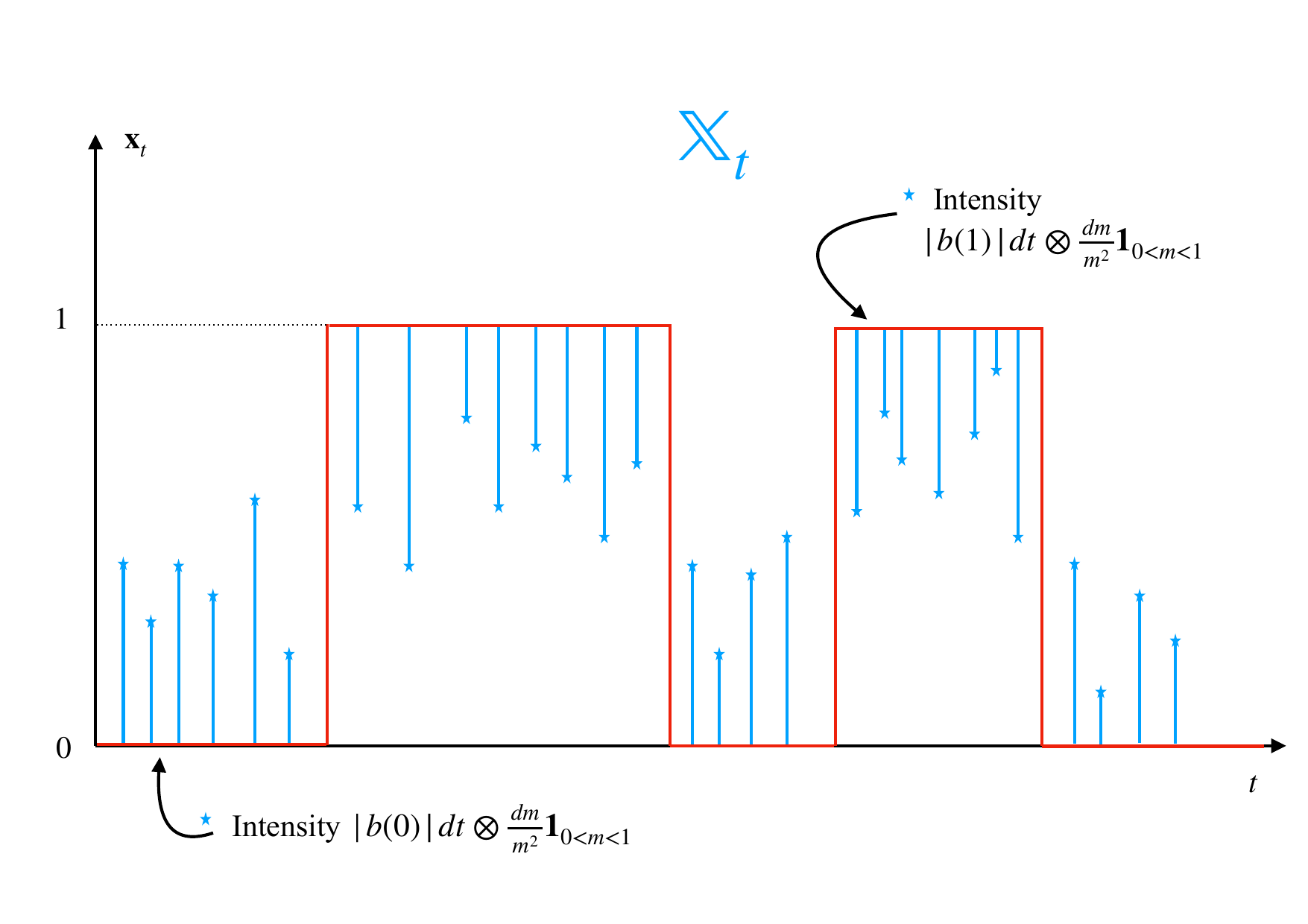}
\caption{Sketch of the two limiting processes. The Markov pure jump process $\xb$ is in red, and the set-valued spike process $\X$ is in blue. }
\label{fig:sketch}
\end{figure}

On the other hand, we define the spike process as a {\it set-valued} random path $\X: \R_+ \rightarrow \Pc\left( [0,1] \right)$,  where $\Pc\left( [0,1] \right)$ is the power set of the segment $[0,1]$. For a comprehensive sketch, see Figure \ref{fig:sketch}. It is formally obtained as follows:
\begin{itemize}
\item Sample a random initial segment $\X_0$ as
$$
   \X_0 = \left\{
\begin{array}{ll}

[Y,1] \textrm{ when $\xb_0 = 1$,} & \P\left( Y \in dy \ | \ \xb_0 = 1\right) = \frac{1-x_0}{x_0} \mathds{1}_{\{0 < y < x_0\}} \frac{dy}{(1-y)^2} \ ,\\

[0,Y] \textrm{ when $\xb_0 = 0$,} & \P\left( Y \in dy \ | \ \xb_0 = 0\right) = \frac{x_0}{1-x_0} \mathds{1}_{\{x_0 < y < 1\}} \frac{dy}{y^2} \ .
\end{array}
   \right.
$$
\item Sample $\left( t, \widetilde{M}_t \right)$ following a Poisson point process on $\R_+ \times [0,1]$ with intensity
$$ \left( dt \otimes \frac{dm}{m^2} \mathds{1}_{\{ 0 \leq m < 1 \}} \right) \ .$$
Then, by progressively rescaling time for $\left( t, \widetilde{M}_t \right)$ by
$$
   \left\{
\begin{array}{ccc}
\ \left|b(0)\right|^{-1} & \textrm{ when } & \xb_t = 0 \ ,\\
\ \left|b(1)\right|^{-1} & \textrm{ when } & \xb_t = 1 \ ,\
\end{array}
   \right. \
$$
we obtain a Poisson point process with random intensity which we denote by $\left(t, M_t \right)$. 

Equivalently, one can sample two independent Poisson point processes on $\R_+ \times [0,1]$ with the above intensity, and then rescale them in time by $\left|b(0)\right|^{-1}$ and $\left|b(1)\right|^{-1}$ respectively. The process $\left(t, M_t \right)$ can also obtained by picking either the first or the second one, depending on the current value of $\xb$.
\item Finally
$$
   \X_t = \left\{
\begin{array}{ccc}
\ [0, M_t]   & \textrm{ if } & \xb_t = \xb_{t^-} = 0 \ ,\\
\ [1-M_t, 1] & \textrm{ if } & \xb_t = \xb_{t^-} = 1 \ ,\\
\ [0, 1]     & \textrm{ if } & \xb_t \neq \xb_{t^-} \ .
\end{array}
   \right.
$$
\end{itemize}

Notice that by virtue of $(t, M_t)$ being a Poisson point process with finite intensity away from zero, there are no points with the same abscissa and only countably many $t \in \R_+$ with $M_t>0$. If there is no point with abscissa $t \in \R_+$, then it is natural to set $M_t = 0$ and thus $\X_t = \{\xb_t\}$. This convention accounts for the infinite measure at zero.

\subsection{Main result} We can now state:
\begin{thm}[Main Theorem]
\label{thm:limit_thm_1d}
Under Assumptions \ref{assumption:main}, it is possible to couple the processes $\left( \xb, \X \right)$ and $X^\gamma$ for all values of $\gamma>0$ on the same probability space, so that the following limits hold almost surely.

\begin{itemize}
\item Upon smoothing via a continuous function with compact support $f: \R_+ \times \R \rightarrow \R$, we have the almost sure convergence:
\begin{align}
\label{eq:limit_process_xb}
   \lim_{\gamma \rightarrow \infty}
   \int_0^\infty f( t, X_t^\gamma) \, dt
   = &
   \int_0^\infty f( t, \xb_t ) \, dt
   \ .
\end{align}
\item In the sense of {\it Hausdorff convergence of closed sets}, for all $H>0$, we have the almost sure convergence of graphs:
\begin{align}
\label{eq:limit_process_X}
\lim_{\gamma \rightarrow \infty}
\left( X_t^\gamma ; \ 0 \leq t \leq H \right) 
= & \left( 
	\X_t
   ; \ 0 \leq t \leq H
   \right)
   \ .
\end{align}
\end{itemize}
\end{thm}

\begin{rmk}[Explanations]
\label{rmk:limit_thm_1d_explanation}
The first part of the theorem can be loosely reformulated by saying that the convergence of $X^\gamma$ to $\xb$ holds upon smoothing, which amounts to deleting the spikes. Nevertheless, one needs an appropriate notion of convergence in order to capture the spikes, which are infinitely thin in the limit. Thus, we resort to the Hausdorff metric on the collection of closed sets of $[0,H] \times [0,1]$ \cite[Ex 7, p.280]{Munkres2000}. It is defined on closed sets $A$ and $B$ via:
\begin{align}
\label{def:hausdorff}
   {\rm d}_{\H}(A,B)
 := & \ \inf\left\{ \varepsilon>0 \ |
               \ A \subset B + \varepsilon \B \ ,
               \ B \subset A + \varepsilon \B
               \right\} \ ,
\end{align}
where $\B$ is the unit ball. The second part of theorem says that ${\rm d}_{\H}( A^\gamma, B ) \rightarrow 0$ where $A^\gamma$ is the graph of $X^\gamma$:
$$ A^\gamma := \left\{ (t, X_t^\gamma) \ , \ 0 \leq t \leq H \right\}
             = \bigsqcup_{ 0 \leq t \leq H} \left\{ t \right\} \times \{ X_t^\gamma \} \ $$
and $B$ is given by
$$ B := \left\{ (t, x) \ , \ 0 \leq t \leq H, \ x \in \X_t \right\} = \bigsqcup_{ 0 \leq t \leq H} \left\{ t \right\} \times \X_t \ . $$
The set $B$ is seen as the graph of the multi-valued function $\X$. The fact that $B$ is a closed set comes as a by-product of the proof.
\end{rmk}

Note that the dynamics of the jump process $\xb$ and of the spike process $\X$ depend on the characteristics $(b, \sigma^2)$ of the initial diffusion \eqref{eq:base} only through the absolute value of the drift $b$ at 0 and 1. Indeed, as long as Assumption \ref{assumption:main} is satisfied, these limiting processes are identical no matter the values of the drift $b$ and the diffusivity $\sigma$ in the bulk $(0,1)$. In fact, the only impact of $\sigma$ is in the selection of the space where the jump process $\xb$ will live, i.e. here $\{0, 1\}$.

This can be contrasted with the jump processes appearing in the weak noise limits \cite[Chapter 6]{FW12}. In this case, with diffusions whose drift is the gradient of a potential, the limiting jump processes depend on the full landscape given by this potential. Therefore, it is fair to say that strong noise limits are far more universal than weak noise limits.

\subsection{Further remarks}
Our approach uses crucially scale functions and the Dambis-Dubins-Schwarz Theorem, which are one-dimensional tools. In particular, extensions to a multi-dimensional setting are absolutely not straightforward. This is a current subject of investigations.

On a side note, the coupling of the processes $\left(\xb, \X \right)$ with $X^\gamma$ for different $\gamma>0$ is nothing but a convenient device. Exactly like Skorohod's Representation Theorem \cite[Theorem 6.7]{B13}, it allows to recast weak convergence to an almost sure convergence. Such a coupling is particularly convenient in order to avoid formalizing the weak convergence of random closed sets in the Hausdorff topology.

Finally, let us comment on the choice of the Hausdorff topology. It is very natural as it corresponds to the intuition gained from observing the plot in Figure \ref{fig:twoState}. This topology is not so uncommon if we were interested in the convergence in the space $D$ of càdlàg functions: in this case, Hausdorff convergence of completed graphs is the Skhorohod $M_2$ topology. See \cite[Section 11.5]{W02} for a definition and the relationship to the other Skorohod topologies on $D$. Because the limiting process $\X$ does not belong to $D$, we cannot invoke the previous literature and we are thus forced to give an independent treatment.

\section{Proof of the Main Theorem \ref{thm:limit_thm_1d}}

As mentioned before, the main ideas behind the proof of Theorem \ref{thm:limit_thm_1d} are very specific to dimension $1$:
\begin{itemize}
\item Every one-dimensional diffusion is a Brownian motion upon changing space and time. 
\item And when time is parametrized by the inverse of local time, a Brownian trajectory can be broken into excursions thanks to Itô's Excursion Theory.
\end{itemize}

More precisely, the first point is achieved by sucessively composing with the scale function in order to obtain a martingale and then invoking the Dambis-Dubins-Schwarz Theorem \cite[Chapter V, Theorem 1.6]{RY}. The details are given in Subsection \ref{subsection:rescaling_space_time}. There, we make explicit the coupling for different $\gamma>0$. After computing the asymptotics of these changes of scale in Subsection \ref{subsection:asymptotics}, we are able to force the appearance of the limiting processes $\xb$ and $\X$. The convergences to $\xb$ and $\X$ are respectively treated in Subsections \ref{subsection:limiting_xb} and \ref{subsection:limiting_X}. Only the construction of $\X$ will require Itô's Excursion Theory \cite[Chapter XII]{RY}.

\subsection{Additional notation}
If $X = \left( X_t \ ; \ t \geq 0 \right)$ is a real-valued process, then the hitting time of $a \in \R$ is
$$ \tau_a(X) := \inf\{ t \geq 0 \ | \ X_t \geq a \} \ .$$
When the underlying process is understood from context, we write $\tau_a = \tau_a(X)$.

Also, if $L: t \in \R_+ \mapsto L_t \in \R_+$ is non-decreasing, then its left-inverse is the function defined by 
$$ \forall \ell \ge 0, \quad L_\ell^{\langle -1 \rangle} := \inf\left\{ t \geq 0 \ | \ L_t \geq \ell \right\} \ .$$

\subsection{Rescaling space and time}
\label{subsection:rescaling_space_time}
\ 
\medskip

\begin{itemize}

\item {\bf Scale function:}
Consider the scale function $h_\gamma$ which is harmonic for the process $\left( X_t^\gamma \ ; \ t \geq 0 \right)$. It solves for $0<x<1$:
$$ b(x) h_{\gamma}^\prime (x) + \frac{\gamma}{2} \sigma^2(x) h_{\gamma}^{\prime\prime}(x) = 0,$$
or equivalently
$$ \frac{h_{\gamma}^{\prime\prime}(x)}{h_{\gamma}^{\prime}(x)}
 = -\frac{2 b(x)}{\gamma \sigma^2(x)} .$$
Since $h_\gamma$ is unique up to affine transformations, we choose $h_\gamma(x_0)=x_0$ and $h_\gamma'(x_0)=1$, which leads to:
\begin{align}
\label{def:scale}
   h_\gamma(x) := x_0 + \int_{x_0}^x \exp
   \left( -\int_{x_0}^{y} \frac{2 b(z)}{\gamma \sigma^2(z)} dz \right) dy \ .
\end{align}

\medskip

\item {\bf Time change:}
As announced, we invoke the Dambis-Dubins-Schwarz Theorem in order to write
\begin{align}
\label{eq:coupling}
 h_\gamma (X_t^\gamma) & = \beta_{T_t}
\end{align}
where $\beta$ is a Brownian motion starting at $x_0$ and 
$$ T_t :=T_t^\gamma = \gamma \int_0^t (h_{\gamma}^\prime (X_s^\gamma))^2\,  \sigma^2(X_s^\gamma) \ ds \ .$$
At this point, we need to mention that the coupling for different values $\gamma>0$ takes place in Eq. \eqref{eq:coupling}: we start with sampling the Brownian motion $\beta$, and only then the process $X^\gamma$ is defined through that equation for different values of $\gamma>0$. Taking the inverse, we get 
$$ dT^{\langle -1 \rangle}_{\ell}
 = \frac{d \ell}
        {\gamma \left[h_\gamma^\prime \left(X^\gamma_{T^{{\langle -1 \rangle}}_{\ell} } \right) \right]^2
\,  \sigma^2\left( X^\gamma_{T^{{\langle -1 \rangle}}_{\ell} } \right)} \ .
$$
Since 
$$X^\gamma_{T^{{\langle -1 \rangle}}_{\ell} } = h_\gamma^{\langle -1 \rangle} ( \beta_{\ell}),$$
we get 
$$ dT^{{\langle -1 \rangle}}_{\ell}
 = \frac{d \ell}
        {\gamma \big[ (h_\gamma^{\prime}  \circ h_\gamma^{{\langle -1 \rangle}}) (\beta_{\ell})\big]^2
        \, \left( \sigma^2 \circ h_\gamma^{{\langle -1 \rangle}} \right)(\beta_{\ell}) }
 =: \varphi_\gamma(\beta_{\ell}) d \ell \ .$$

In the end, we have that:
$$ X_t^\gamma = h_\gamma^{{\langle -1 \rangle}} (\beta_{T_t}) \ ,$$
where $T_t$ can be defined by 
$$\int_0^{T_t} \varphi_\gamma(\beta_{\ell}) d \ell = t \ .$$

\end{itemize}

\subsection{Asymptotics for the changes of scale}
\label{subsection:asymptotics}

\begin{lemma}
\label{lemma:asymptotics}
On the one hand, we have the following convergence, uniformly in $y \in \R$:
\begin{align}
\label{eq:cv_h}
 h_\infty^{\langle -1 \rangle}(y) & 
:= \lim_{\gamma \rightarrow \infty} h_\gamma^{\langle -1 \rangle}(y)
 = \left\{
   \begin{array}{ccc}
   0 & \textrm{ if } & y \leq 0       \\
   y & \textrm{ if } & 0 \leq y \leq 1\\
   1 & \textrm{ if } & 1 \leq y
   \end{array}
   \right.
   \ .
\end{align}

And on the other hand, we have the weak convergence:
\begin{align}
\label{eq:cv_varphi}
   \varphi_\gamma & 
   \stackrel{\gamma \rightarrow \infty}{ \longrightarrow }
   \frac{1}{2 |b(0)|} \delta_0 + \frac{1}{2 |b(1)|} \delta_1 \ .
\end{align}
\end{lemma}

\begin{proof}
Thanks to the Asummption \ref{assumption:main} on $b$ and $\sigma$, we see that:
$$ \lim_{x \rightarrow 0^+} h_\gamma(x) = -\infty \ ,
   \textrm{ and }
   \lim_{x \rightarrow 1^-} h_\gamma(x) = \infty  \ .$$
Adding to that the expression \eqref{def:scale}, we see that $h_\gamma$ is a strictly increasing diffeomorphism from $(0,1)$ to $\R$. Moreover, $h_\gamma$ as $\gamma \rightarrow \infty$ tends uniformly to the identity on $(\varepsilon, 1-  \varepsilon)$ for any fixed $\varepsilon \in (0,\half)$. The pointwise convergence in Eq. \eqref{eq:cv_h} thus follows. The result is strengthened to uniform convergence using Dini's Theorem. In fact, the expression $h_\infty^{\langle -1 \rangle}$ should just be understood as a convenient notation.  It is not by any means the inverse of a real valued function.

Now, let us focus on the proof of Eq. \eqref{eq:cv_varphi}. Let $f: \R \rightarrow \R$ be a continuous compactly supported function. Using the change of variables $y = h_\gamma(x)$, we have:
 \begin{align*}
      \int_\R f (y) \varphi_\gamma (y) \, dy 
  = & \int_\R \frac{f (y) }
                   {\gamma \ 
                    (\sigma^2 \circ h_\gamma^{\langle -1 \rangle}) (y)
                    \left(
					(h_\gamma' \circ h_\gamma^{{\langle -1 \rangle}})
					\right)^{2}  (y)
				    } \, dy \\
  = & \int_0^1 \frac{(f \circ h_\gamma) (x)}
                    {\gamma \sigma^2(x)
		       h'_\gamma(x)}\, dx \\
  = & \int_0^1 \frac{(f \circ h_\gamma) (x)}
                    {\gamma \sigma^2(x) }
                      \exp\left( \int_{x_0}^{x} \frac{2 b(z)}{\gamma \sigma^2(z)} dz \right)\, dx \ .
 \end{align*}
 Because the integrand is ill-behaved only in the neighborhood of $0$ and $1$, we can discard the middle range. More precisely, let $\varepsilon>0$ be a small constant such that each of $b_{|[0, \varepsilon]}$ and $b_{|[1-\varepsilon, 1]}$ remain with constant sign, bounded away from zero. We can write:
 \begin{align*}
      \int_\R f (y) \varphi_\gamma (y) \, dy
  = & \ 
      o_\gamma(1)
      +
      \int_{(0,\varepsilon] \sqcup [1-\varepsilon, 1)}
      \frac{(f \circ h_\gamma) (x)}
           {\gamma \sigma^2(x) }
           \exp\left( \int_{x_0}^{x} \frac{2 b(z)}{\gamma \sigma^2(z)} dz \right)\, dx \ .
 \end{align*}
 
Now notice that $\frac{1}{h_\gamma'}: x \mapsto \exp\left( \int_{x_0}^{x} \frac{2 b(z)}{\gamma \sigma^2(z)} dz \right)$ is increasing on $(0,\varepsilon]$ and decreasing on $[1-\varepsilon, 1)$. Also, the limits at the boundary are $\frac{1}{h_\gamma'}(0^+)=\frac{1}{h_\gamma'}(1^-)=0$. As such, by performing a change of variable:
 \begin{align*}
      &\int_\R f (y) \varphi_\gamma (y) \, dy\\
  = & \ o_\gamma(1) + 
      \int_{{(0,\varepsilon] \sqcup [1-\varepsilon, 1)}}
      (f \circ h_\gamma) (x) 
                  \frac{1 }
                       {2 b(x) }
                  \cfrac{d}{dx}\left[ \exp\left( \int_{x_0}^{x} \frac{2 b(z)}{\gamma \sigma^2(z)} dz \right) \right] \, dx\\
  = & \ o_\gamma(1) + 
      \int_0^{\frac{1}{h_\gamma'(\varepsilon)}  } \frac{(f \circ h_\gamma \circ q^1_\gamma)  (r)} {2 ( b \circ q^1_\gamma) (r) } \, dr
    - \int_0^{\frac{1}{h_\gamma'(1-\varepsilon)}} \frac{(f \circ h_\gamma \circ q^2_\gamma)  (r)} {2 ( b \circ q^2_\gamma) (r) } \, dr
      \ ,
 \end{align*}
 where $q^1_\gamma = \left( \frac{1}{h_\gamma'}_{\big\vert(0,\varepsilon]  } \right)^{\langle -1 \rangle}$
   and $q^2_\gamma = \left( \frac{1}{h_\gamma'}_{\big\vert[1-\varepsilon,1)} \right)^{\langle -1 \rangle}$. We are done upon applying Lebesgue's Dominated Convergence Theorem to the following limits. On the one hand $q^1_\gamma \rightarrow 0$ and $q^2_\gamma \rightarrow 1$ as $\gamma \rightarrow \infty$, since $\frac{1}{h_\gamma'}$ converges to vertical lines at zero and $1$. On the other hand, given the simple convergence: 
$$ \left(h_\gamma^{\langle -1 \rangle}\right)'
   \stackrel{\gamma \rightarrow \infty}{\longrightarrow}
   \mathds{1}_{[0,1]} \ ,
$$
taking left-inverses for the ascending and descending parts yields:
$$ h_\gamma \circ q^1_\gamma
 = \left( \frac{1}{{h_\gamma'}_{\big|(0,\varepsilon]}} 
          \circ h_\gamma^{\langle -1 \rangle} \right)^{\langle -1 \rangle}
 = \left( \left(h_\gamma^{\langle -1 \rangle}\right)_{\big|(-\infty,h_\gamma(\varepsilon)]}' \right)^{\langle -1 \rangle}
 \stackrel{\gamma \rightarrow \infty}{\longrightarrow} 0 \ ,
$$
$$ h_\gamma \circ q^2_\gamma
 = \left( \frac{1}{{h_\gamma'}_{\big|[1-\varepsilon,1]}}
   \circ h_\gamma^{\langle -1 \rangle} \right)^{\langle -1 \rangle}
 = \left( \left(h_\gamma^{\langle -1 \rangle}\right)_{\big|[h_\gamma(1-\varepsilon), \infty)}' \right)^{\langle -1 \rangle}
 \stackrel{\gamma \rightarrow \infty}{\longrightarrow} 1 \ .
$$
We have thus proved that:
$$
   \int_\R f (y) \varphi_\gamma (y)\, dy \stackrel{\gamma \rightarrow \infty}{\longrightarrow}
   \frac{f(0)}{2\left| b(0) \right|}
   +
   \frac{f(1)}{2\left| b(1) \right|}
   \ .   
$$
\end{proof}

\subsection{The limiting process \texorpdfstring{$\xb$}{xb} and proof of Eq. \texorpdfstring{\eqref{eq:limit_process_xb}}{}}
\label{subsection:limiting_xb}

The natural time scale for the process $X^\gamma$ will be referred as {\it real time}. And the changed scale, which is natural for the DDS Brownian motion $\beta$, will be referred to as {\it effective time}. This follows the denomination of Bauer, Bernard and Tilloy; and it is helpful in explaining proofs where several time scales interact.

We start by a simple yet crucial lemma:
\begin{lemma}
Let $L^a(\beta)$ be the local time accumulated by $\beta$ at the point $a$. We have the almost sure convergence:
\begin{align}
\label{eq:cv_T_inv}
 T^{\langle -1 \rangle}_{\ell} \stackrel{\gamma \rightarrow \infty}{\longrightarrow} 
   & 
   \frac{1}{2|b(0)|} L^0_\ell(\beta) 
 + \frac{1}{2|b(1)|} L^1_\ell(\beta) 
   \ ,
\end{align}
uniformly on all compact sets of the form $[0,L]$.
\end{lemma}
\begin{proof}
Thanks to the occupation time formula \cite[Chapter VI, Corollary 1.6]{RY}, we have:
$$ T^{\langle -1 \rangle}_{\ell}
 = \int_0^\ell \varphi_\gamma( \beta_u ) \, du
 = \int_\R \varphi_\gamma(a) L_\ell^a(\beta) \, da \ .$$
As customary, we are considering a version of local time so that the map $a \mapsto L^a_\ell(\beta)$ is continuous and compactly supported. Therefore, the previous Lemma \ref{lemma:asymptotics} immediately yields the pointwise convergence in Eq. \eqref{eq:cv_T_inv}. This convergence holds uniformly in $\ell \in [0, L]$, thanks to Dini's Theorem. Notice that by doing so, we avoid invoking ``the approximate Skorohod Reflection Theorem'' from \cite{BBT16} which gives local times in the approach of Bauer, Bernard and Tilloy.
\end{proof}

This shows the importance of the process $\left( \sigma_t; t \geq 0 \right)$ defined as:
$$ \sigma_t
   :=
   \inf \left\{ \ell \geq 0, \ 
   \frac{L_\ell^0(\beta)}{2|b(0)|} + \frac{L_\ell^1(\beta)}{2 |b(1)|} > t \right\}
   .
$$
It is a pure jump process defined as the inverse of mixed local times accumulated by $\beta$ at levels $0$ and $1$. Clearly, it will be the time change of interest from real time to effective time. Already, we see that $\sigma$ is a key ingredient in the following proposition, which shows the appearance of the pure jump process $\xb$.

\begin{proposition}
\label{proposition:equivalent_MC}
Let $\beta$ be a standard one-dimensional Brownian motion started at $x_0 \in [0,1]$. The process
$$(\xb_t \, ;\, t\ge 0) := (\beta_{\sigma_t}\, ; \, t\ge 0)$$
is a càdlàg $\{0,1\}$-valued Markov process with $\P\left( \xb_0 = 1 \right) = x_0$ and jump rates $W$ where:
$$ W^{0,1} = |b(0)| \ ,
\quad
   W^{1,0} = |b(1)| \ .$$
\end{proposition}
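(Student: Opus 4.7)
The plan is to identify $Q$ as a time-changed Brownian motion restricted to the boundary $\{0,1\}$, and then to compute the jump rates via It\^o excursion theory. First, I would verify that $Q_t \in \{0,1\}$ for every $t \geq 0$. The additive functional
$$H_\ell := \frac{L^0_\ell(\beta)}{2\lambda p} + \frac{L^1_\ell(\beta)}{2\lambda(1-p)}$$
is continuous and non-decreasing in $\ell$, and grows strictly only when $\beta_\ell \in \{0,1\}$, being flat on every excursion of $\beta$ away from this pair. The right-continuous inverse $\sigma_t$ therefore skips over each such plateau, so $\beta_{\sigma_t} \in \{0,1\}$ for all $t$, and $Q$ is automatically c\`adl\`ag. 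For the initial distribution, $\sigma_0 = \inf\{\ell \geq 0 : H_\ell > 0\}$ is the first hitting time of $\{0,1\}$ by $\beta$, and the classical gambler's ruin formula for Brownian motion started at $x_0 \in [0,1]$ yields $\P(Q_0 = 1) = x_0$.

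Next, I would establish the Markov property via the strong Markov property of $\beta$. Since $H$ is a continuous additive functional adapted to the natural filtration of $\beta$, each $\sigma_t$ is a $\beta$-stopping time. Strong Markov at $\sigma_t$ then tells us that $(\beta_{\sigma_t + u})_{u \geq 0}$ is a Brownian motion started at $Q_t$ and independent of $\mathcal{F}_{\sigma_t}$; the future of $H$, and hence of $\sigma$ and of $Q$, is therefore built entirely from this post-$\sigma_t$ Brownian motion and depends on the past only through $Q_t$.

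The key computation is the determination of the jump rates, via It\^o excursion theory. Assume $\beta$ starts at $1$ and consider its excursions away from $1$: they form a Poisson point process on $[0,\infty)$ indexed by the local time $L^1$, with characteristic measure $n_1$. A standard result of Brownian excursion theory states that the It\^o measure of excursions of depth at least $h$ equals $1/(2h)$, so that $n_1(\text{excursion reaches } 0) = 1/2$. Excursions from $1$ that do not reach $0$ correspond precisely to flat pieces of $H$ and are skipped by $\sigma$, contributing nothing to $Q$; but the first excursion from $1$ that does reach $0$ produces a jump $1 \to 0$ of $Q$. Hence the accumulated local time $L^1$ before this jump is exponentially distributed with rate $1/2$, and converting back to real time through $dt = dL^1/(2\lambda(1-p))$ yields an exponential holding time at state $1$ with parameter $2\lambda(1-p) \cdot \tfrac{1}{2} = \lambda(1-p)$, giving $W^{1,0} = \lambda(1-p)$. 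A symmetric argument at level $0$ gives $W^{0,1} = \lambda p$.

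The main obstacle is the clean excursion-theoretic bookkeeping: one must justify carefully that the plateaus of $H$ skipped by $\sigma$ correspond exactly to the excursions from the current boundary point that fail to reach the opposite boundary, so that the holding times and transitions of $Q$ really match those of the first excursion of $\beta$ from its current level that hits the other one.
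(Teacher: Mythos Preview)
Your proposal is correct and the overall architecture---c\`adl\`ag and $\{0,1\}$-valuedness from the plateau structure of $H$, initial law from the gambler's ruin, Markov property from the fact that the $\sigma_t$ are stopping times---matches the paper exactly.

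The one genuine difference is in the computation of the jump rates. The paper identifies the holding time at $0$ as $L^0_{T_1}(\beta)/(2\lambda p)$ and then invokes the first Ray--Knight theorem, which says that $(L^a_{T_1}(\beta))_{0\le a\le 1}$ is a $\mathrm{BESQ}(2)$ started at $0$; evaluated at $a=0$ this gives $L^0_{T_1}(\beta)\stackrel{d}{=}2\mathcal E(1)$, hence an exponential holding time of rate $\lambda p$. You instead use the Poisson point process of excursions from the current level together with the It\^o measure computation $n(\text{excursion of depth }\ge 1)=\tfrac12$, so that the $L^1$-time of the first excursion from $1$ reaching $0$ is $\mathrm{Exp}(\tfrac12)$, and then you rescale by $2\lambda(1-p)$. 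Both routes yield the same exponential law; your excursion argument is arguably more direct and self-contained (and in fact the paper itself uses exactly this It\^o-measure fact later, in the proof of the Corollary describing the spikes), while the Ray--Knight route packages the computation into a single classical reference.
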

\begin{proof}
Firstly, given that $\sigma$ is càdlàg with jumps corresponding to excursions of $\beta$ away from $0$ and $1$, $\xb_t = \beta_{\sigma_t}$ is indeed càdlàg $\{0,1\}$-valued. Moreover, since $(\sigma_t\, ; \, t\ge 0)$ is an increasing collection of stopping times, $(\xb_t\, ;\, t\ge 0)$ is Markovian - possibly not homogenous in time.

Secondly, $\P(\xb_0=1)= \P_{x_0} (\beta_{\sigma_0} =1)= \P_{x_0} ( \tau_1 (\beta) < \tau_0 (\beta)) =: s(x_0)$ where $\tau_a (\beta)$ is the hitting time of $a$ by $\beta$. Since $s$ is a harmonic function for $\beta$ with boundary conditions $s(1)=1$, $s(0)=0$, we deduce that $s(x_0)=x_0$. Finally, let us prove that $\xb$ is Markov with jump rates as described. Suppose we run the Brownian motion $\beta$ started from $0$ and killed upon hitting $1$. Let $\tau_1(\beta)$ be first time that $\beta$ reaches $1$ - this is effective time. And let $\tau_1(\beta_\sigma)$ be first time that $\beta_{\sigma_.}$ reaches $1$ - this is real time. Given the definition of $\sigma$, we have $\frac{L_{\tau_1(\beta)}^0(\beta)}{2|b(0)|} = \tau_1( \beta_\sigma )$. Thanks to the first Ray-Knight Theorem \cite[Chapter XI, Theorem 2.2]{RY}, the accumulated local time $L_{\tau_1(\beta)}^0(\beta)$ is the square of a two dimensional Bessel process at time $1$, or equivalently $2{\mathcal E} (1)$ where $\mathcal E (1)$ is a standard exponential random variable. As such $\tau_1(\beta_\sigma) = \frac{2 {\mathcal E} (1) }{2|b(0)|} = \frac{{\mathcal E} (1) }{|b(0)|}$ and hence the jumping rate from $0$ to $1$ is $W^{0,1} = |b(0)|$.

The proof that $W^{1,0} = |b(1)|$ follows exactly the same lines.
\end{proof}

We can now finish the proof of Eq. \eqref{eq:limit_process_xb}. Recall that 
$$ X_t^\gamma
 = (h_\gamma^{\langle -1 \rangle} \circ h_\gamma) ( X_t^\gamma )
 = h_\gamma^{\langle -1 \rangle}( \beta_{T_t} ) \ .
$$

Taking $f$ compactly supported and performing a change of variable:
\begin{align*}
    \int_0^\infty f\left( t, X_t^\gamma \right) dt
= & \int_0^\infty f\left( t, h_\gamma^{\langle -1 \rangle}( \beta_{T_t} ) \right) dt\\
= & \int_0^\infty f\left( T^{\langle -1 \rangle}_{\ell}, h_\gamma^{\langle -1 \rangle}( \beta_\ell ) \right)
                  d T^{\langle -1 \rangle}_{\ell} \ .
\end{align*}
Because of Eq. \eqref{eq:cv_T_inv}, we have the weak convergence $dT^{\langle -1 \rangle}_{\ell} \stackrel{\gamma \rightarrow \infty}{\longrightarrow} d\sigma^{\langle -1 \rangle}_\ell$. Adding to that the convergence of the inverse scale function $h_\gamma^{\langle -1 \rangle}$, we have:
\begin{align*}
    \int_0^\infty f(t, X_t^\gamma) dt
\stackrel{\gamma \rightarrow \infty}{\longrightarrow}
    & \int_0^\infty \mathds{1}_{\beta_\ell \in [0,1]} f ( \sigma^{\langle -1 \rangle}_\ell, \beta_\ell ) d \sigma^{\langle -1 \rangle}_\ell \\
  = & \int_0^\infty \mathds{1}_{\beta_{\sigma_t} \in [0,1]} f ( t, \beta_{\sigma_t} ) dt \\
  = & \int_0^\infty f ( t, \xb_t ) dt \ .
\end{align*}
This concludes the proof of the first part of the theorem. Notice that the above weak convergence argument does blur the fine properties of the spike process $\X$, which we now move on to.

\subsection{The limiting process \texorpdfstring{$\X$}{X} and proof of Eq. \texorpdfstring{\eqref{eq:limit_process_X}}{}}
\label{subsection:limiting_X}

We shall use the notation from the explanatory Remark \ref{rmk:limit_thm_1d_explanation}. As such let $A^\gamma$ be the graph of $X^\gamma$. For $\gamma>0$, we have via time change:
\begin{align*}
A^\gamma
  & = \left\{ (t, X_t^\gamma) \ | \ 0 \leq t \leq H \right\} \\
  & = \left\{ \left( T^{\langle -1 \rangle}_\ell, h^{\langle -1 \rangle}_\gamma(\beta_\ell) \right) \ | \ 0 \leq \ell \leq T_H \right\} \\
  & \stackrel{\gamma \rightarrow \infty}{\longrightarrow}
      \left\{ \left( \sigma^{\langle -1 \rangle}_\ell, h_\infty^{\langle -1 \rangle}(\beta_\ell) \right) \ | \ 0 \leq \ell \leq \sigma_H \right\},
\end{align*}
the limit holding in the Hausdorff topology. Indeed, one can easily show that uniform convergence of maps yields the Hausdorff convergence of their images. Moreover, $A^\gamma$ is the image of the the map $\ell \mapsto \left( T^{\langle -1 \rangle}_\ell, h_\gamma^{\langle -1 \rangle}(\beta_\ell) \right)$ on the (random) interval $[0, T_H]$ and that sequence of maps converges uniformly to $\ell \mapsto \left( \sigma^{\langle -1 \rangle}_\ell, h_\infty^{\langle -1 \rangle}(\beta_\ell) \right)$. 

All that remains is proving that 
$$ \left\{ \left( \sigma^{\langle -1 \rangle}_\ell, h_\infty^{\langle -1 \rangle}(\beta_\ell) \right)
           \ | \ 0 \leq \ell \leq \sigma_H
   \right\}
 = \bigsqcup_{t \in [0,H]} \left\{ t \right\} \times B_t
$$
is as in the statement of theorem. Equivalently, we need to prove that for every $t \in [0,H]$, the set $\X_t$ can be obtained as
\begin{align*} 
B_t = & \left\{ \ h_\infty^{\langle -1 \rangle}(\beta_\ell) \ | \ \sigma_\ell^{\langle -1 \rangle} = t \right\} \\
    = & \left\{ \ \beta_\ell \mathds{1}_{\{ 0 \leq \beta_\ell \leq 1 \}}
                  + \mathds{1}_{\{ \beta_\ell > 1 \}} \ \big| \ \frac{L_\ell^0(\beta)}{2|b(0)|} + \frac{L_\ell^1(\beta)}{2 |b(1)|} = t \right\} \ ,
\end{align*}
which is given by the following proposition. It gives a very efficient simulation scheme for the spike process. 

\begin{proposition}
In order to sample the limiting aforementioned random set
$$ \bigsqcup_{t \in [0,H]} \left\{ t \right\} \times B_t $$
with $X_0^\gamma = x_0 \in [0,1]$, one has to:

\begin{enumerate}
 \item {\bf Simulation of the equivalent $\{0,1\}$ Markov chain:}
 
 Run $\left( \xb_t, \ t\geq 0 \right)$ started at $x_0$, as in Proposition \ref{proposition:equivalent_MC}. 
 
 \item {\bf Simulation of the first spike $B_0$:}

 If $x_0 = 0$ the spike at $t = 0$ is $B_0=\{0\}$ and if $X_0 = 1$, the spike at $t = 0$ is $B_0=\{1\}$.  If $x_0 \in (0,1)$, the spike at $t = 0$ is an interval of the form $[Y, 1]$ with probability $x_0$, and $[0,Y]$ with probability $1-x_0$. The probability density of $Y$ in the first case is $\frac{1-x_0}{x_0} (1-y)^{-2} \mathds{1}_{\{0 <y<x_0\}}$, and the probability density of $y$ in the range of the second case is $\frac{x_0}{1-x_0} y^{-2}\mathds{1}_{\{x_0 < y < 1\}}$. 

 \item {\bf Simulation of the other spikes $B_t$, $t>0$:}
 
Sample spikes $\left( t, M_t \right)$ following a Poisson point process with intensity $\left( dt \otimes \frac{dm}{m^2} \mathds{1}_{0<m<1} \right)$. Then when the current state is $0$, rescale time by a factor $\left|b(0)\right|$ and every spike $(t, M_t)$ yields an upward segment $[0, M_t]$. When the current state is $1$, rescale time by a factor $\left|b(1)\right|$ and the spike is a downward segment $[1-M_t, 1]$.
\end{enumerate}
\end{proposition}

\begin{proof}
If $t=0$, $B_0$ is the segment spanned by the Brownian motion $\beta$ starting at $x_0$ until it hits $\{0, 1\}$. Thanks to this fact and standard stopping time arguments, let us analyze $B_0$ and thus prove the second point. We see that $\{ B_0 = [0, Y] \} = \{ \tau_1(\beta) < \tau_0(\beta) \}$ while $\{ B_0 = [Y, 1] \} = \{ \tau_0(\beta) < \tau_1(\beta) \}$. We deal with the first case leaving the second to the reader. The event $\{ B_0 = [0, Y] \}$ occurs exactly when $\{ \tau_1(\beta) < \tau_0(\beta) \} = \{ \xb_0 = 0 \}$ which happens with probability $\P_{x_0}\left( \tau_1(\beta) < \tau_0(\beta) \right) = 1-x_0$. Moreover for $0 \leq x_0 \leq y < 1$:
$$ \P_{x_0}\left( y \leq Y, \ B_0 = [0,Y] \right) 
 = \P_{x_0}\left( \tau_y(\beta) < \tau_0(\beta) \right) = \frac{x_0}{y} \ .$$
As such, as long as $0 \leq x_0 \leq y < 1$:
$$ \P_{x_0}\left( Y \in dy \right) 
 = -\frac{d}{dy} \left( \frac{\P_{x_0}\left( y \leq Y, \ B_0 = [0,Y] \right)}{\P_{x_0}\left( B_0 = [0,Y] \right)} \right) dy
 = \frac{x_0}{1-x_0} \frac{dy}{y^2} \ .$$

We can now focus on the point (3) dealing with $B_t$ for $t>0$. Itô's Excursion Theory for Brownian motion states that \cite[Chapter XII]{RY} the trajectory of $\beta$ can be split into excursions indexed by the inverse of local time. Because of the definition of $\sigma$, we are dealing with a mixed local time and excursions around the points $0$ and $1$. Since $\xb_t = \beta_{\sigma_t}$ and the definition of $B_t$, we have for $0 < t \leq H$:
\begin{itemize}
\item Either $\xb_t = \xb_{t^-} = 0$: We are looking at an excursion around $0$ and $B_t = [0, \max_{[\sigma_{t^-}, \sigma_t]} \beta]$.
\item Either $\xb_t = \xb_{t^-} = 1$: We are looking at an excursion around $1$ and $B_t = [\min_{[\sigma_{t^-}, \sigma_t]} \beta, 1]$.
\item Otherwise, $\xb_t \neq \xb_{t^-}$ and we are looking at an excursion between $0$ and $1$. Here $B_t = [0,1]$ because of the intermediate value theorem.
\end{itemize}

Thanks to the Markov property, we only need to give an excursion point of view of the process for $x_0=0$ and up to hitting $1$, and for $x_0=1$ and up to hitting $0$. We shall focus on the first case and leave the other one to the reader. Let $H = L^0(\beta)^{\langle-1\rangle}$ be the inverse of local time and $\tau_1(\beta)$ the time at which $1$ is reached. On the segment $[0, \tau_1(\beta)]$, we have:
\begin{align}
\label{eq:time_change}
\forall \ t \in [0, T_1], \quad \sigma_t & = H_{2 \left|b(0)\right|  t} \ . 
\end{align}
Now recall that the Brownian path $\beta$ can be broken into a Poisson process of excursions away from zero $\left( e_t, \ t>0 \right)$. As such $(t, e_t)$ has intensity $\left( dt \otimes n \right)$, where $n$ is the Itô measure on excursions and the time scale is that of $H$ i.e. there is an excursion for every $t$ such that $H_{t}-H_{t^-}>0$. By changing to the time scale of $\sigma$ via Eq. \eqref{eq:time_change}, $(t, e_t)$ has intensity $\left( {2 \left|b(0)\right|} \, dt \otimes n \right)$ and there is an excursion every $t$ such that $\sigma_t-\sigma_{t^-}>0$.

Moreover the Itô measure restricted to positive excursions gives intensity $\half \frac{dm}{m^2} \mathds{1}_{0<m}$ to the decoration by maxima \cite[Chapter XII, Theorem 4.5]{RY}. Now, the process has to be killed at the first excursion of height $\geq 1$. Because of the thinning property of Poisson processes, one still has a Poisson process and the decoration by maxima has intensity $\half \frac{dm}{m^2} \mathds{1}_{0<m<1}$.

In the end, by looking only at the maxima $M_t = \sup_s e_t(s)$ of positive excursions $e_t$, $\left(t, M_t\right)$ is a Poisson process with intensity $\left|b(0)\right| \,  dt \otimes \frac{dm}{m^2} \mathds{1}_{0<m<1}$, in the time scale of $\sigma$.
\end{proof}

\section{The setup of Rabi oscillations} 

In this last section, motivated by another model in quantum continuous measurements, we consider an SDE where the drift $b$ depends on $\gamma$ and where the boundaries have different behaviors. 

\medskip

{\bf Description of the model:} Actually, Rabi oscillations form the basis of the definition of the second as a unit of time inside atomic clocks. More generally and as explained in \cite[Section 5.2]{L11}, Rabi oscillations play an important role in many engineering applications such  as NMR (Nuclear Magnetic Resonance), MRI (Magnetic Resonance Imaging) or quantum computing. 

The physical context is the following. We consider a qubit exactly as in the context of Eq. \eqref{eq:BBT_SDE}. This time however, there is no thermal bath and the system is subject to an external magnetic field with intensity $w \sqrt{\gamma} \in \R^*$. Even more importantly, the measured observable is not the Hamiltonian of the qubit inside the magnetic field and in fact does not commute with it. As such, whereas Eq. \eqref{eq:BBT_SDE} was about a competition between measurements and thermalization, the resulting SDE translates a competition between measurements and free evolution of the system inside the magnetic field.

As made precise in \cite[Section 2]{BB18}, after a few reductions of the state space, the measurement of Rabi oscillations is completely determined by an angle $\theta_t^\gamma \in \R$ whose dynamic is given by:
\begin{align}
\label{eq:def_rabi}
     d\theta_{t}^\gamma
 = & \left(-\frac{\gamma}{4}\sin\left(2\theta^\gamma_{t}\right)+w\sqrt{\gamma}\right) dt-\sqrt{\gamma}\sin\left(\theta^\gamma_{t}\right)dW_{t} \ .
\end{align}

\begin{figure}[ht]
\includegraphics[scale=0.25]{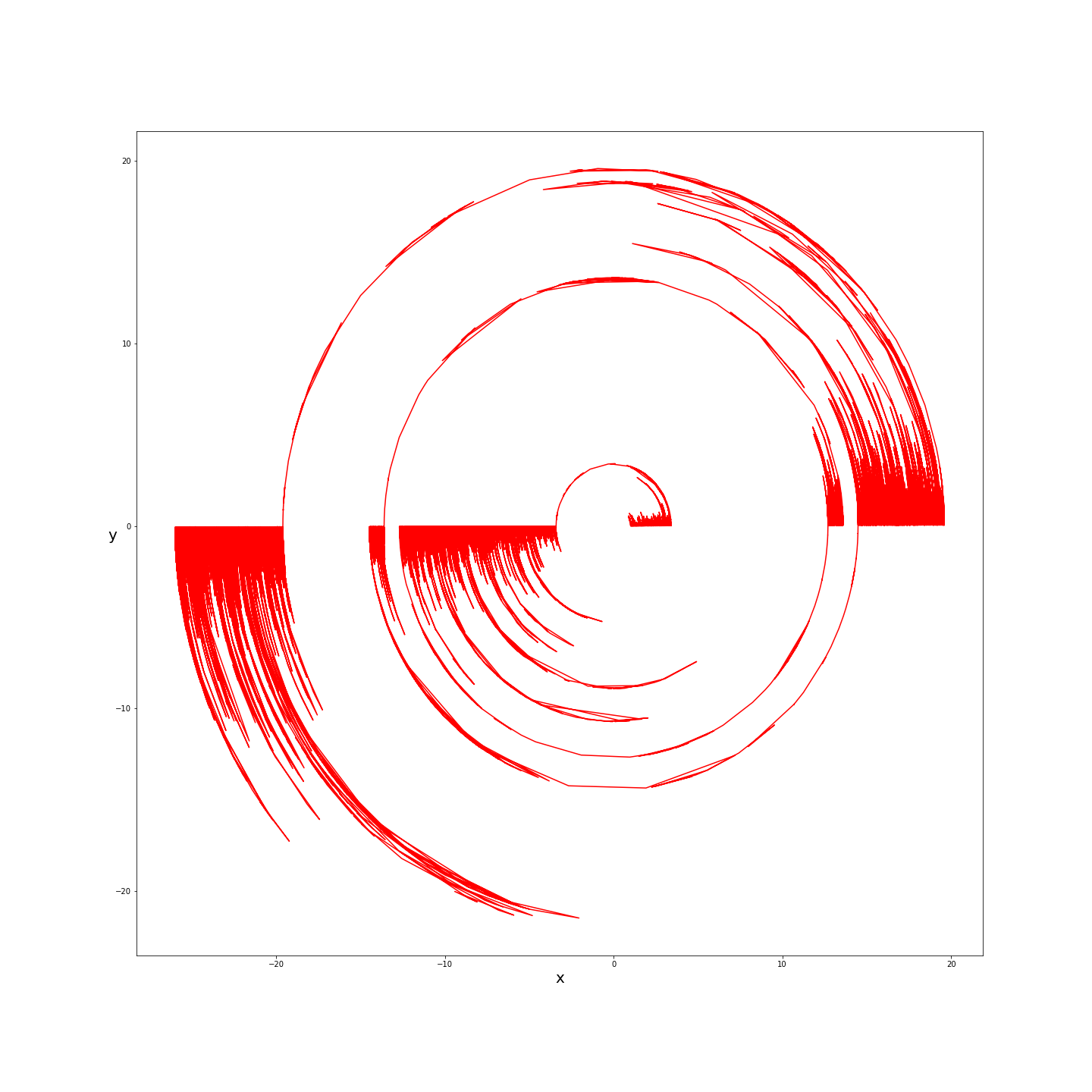}
\caption{Numerical simulation of the process $\left( \theta_t^\gamma; t \geq 0 \right)$ with $w=1.0$ and $\gamma=10^4$. There are $10^6$ time steps and we have plotted the parametric curve $t \mapsto \left( 5t\cos(\theta_t^\gamma), 5t\sin(\theta_t^\gamma) \right)$ for $t \in [0,5]$. The code is also available at the online repository \url{https://github.com/redachhaibi/quantumCollapse}}
\label{fig:rabiRadial}
\end{figure}

When relevant, we will write $\theta^\gamma=\theta^{\gamma, a}$ to specify the initial condition $\theta^\gamma_0=a$. Although we shall adopt the same approach as the one developed for Assumption \ref{assumption:main}, we will see this equation introduces some subtleties. Let us begin by a few reductions. Upon changing the space variable $\theta$ to $-\theta$, one can suppose that $w>0$. Also, since the diffusion's characteristics $\left( b, \sigma^2 \right)$ are $\pi$-periodic, we have the equality in law between processes:
\begin{align}
\label{eq:periodicity_rabi}
	\left( \theta_t^{\gamma, \theta_0} + \pi \ ; t \geq 0 \right)
	\eqlaw
	\left( \theta_t^{\gamma, \theta_0 + \pi} \ ; t \geq 0 \right)
	\ .
\end{align}
As such, without loss of generality, we can assume that $\theta_0 \in (0, \pi]$.

\medskip

{\bf Results: } First of all, the qualitative properties are very different upon fixing $\gamma>0$. This can be seen by analyzing hitting times. Recall that $\tau_a = \tau_a(X)$ denotes the hitting time of $a$ for a process $X$. We have:
\begin{lemma}
\label{lemma:qualitative_rabi}
The process $\theta^\gamma = \left( \theta_t^\gamma \ ; \ t \in \R_+ \right)$ passes through the consecutive intervals $\Z \pi + [0, \pi]$ increasingly, that is to say that each multiple of $\pi$ is reached only once, in an increasing order.
\end{lemma}
\begin{proof}
Let $k \in \Z$ be fixed.
\begin{itemize}
\item {\it{For any $a\in (k\pi, (k+1)\pi]$ the process $\theta^{\gamma,a}$ reaches $(k+1)\pi$ with probability $1$.}} Thanks to Eq. \eqref{eq:periodicity_rabi}, it suffices to prove it for $k=0$. Consider the process starting from $a \in (0, \pi]$ and stopped upon hitting $\pi$. Then the process $\theta^\gamma$ never reaches $0$ and leaves the domain from $\pi$. This is a simple consequence of the fact that  the scale function $h_\gamma$ defined by \eqref{eq:scaleRabi} satisfies $h_\gamma(0^+) = -\infty$, as we shall see, and that for any $0<x<a<y \leq \pi$, hitting probabilities are defined in term of the scale function via:
$$
	\P_a\left( \tau_x < \tau_y \right)
	= 
	\frac{h_\gamma(y) -  h_\gamma(a)}
	     {h_\gamma(y) -  h_\gamma(x)}
	\ .
$$
Indeed, taking $x \rightarrow 0^+$ in the above equation yields that for $0 < a < y \leq \pi$:
$$ \P_a\left( \tau_0 < \tau_y \right) = 0 \ .$$
The process $\theta^\gamma$ starting from $a$ reaches any point in $(a, \pi]$ before $0$, for every finite $\gamma$. In particular, the process reaches $\pi$ almost surely.

\item {\textit{$\theta^{\gamma, k\pi}$ leaves $k\pi$ from the right.}} Thanks to Eq. \eqref{eq:periodicity_rabi}, it suffices to prove it for $k=0$. And indeed, because of the previous argument, for any starting point $a>0$, the reaching time of $-\varepsilon<0$ is almost surely $\tau_{-\varepsilon}=\infty$. Since the SDE has smooth coefficients, the solution $\theta^{\gamma, a}$ is continuous in the initial condition $a$. Therefore $\theta^{\gamma, a} \rightarrow \theta^{\gamma, 0}$ as $a \to 0$ and $\tau_{-\varepsilon}=\infty$ starting from $0$ as well. This means that, starting from zero, the process never leaves the non-negative half-line $[0, \infty)$. 
\end{itemize}
\end{proof}

Now let us define the limiting processes for this instance. On the one hand, we define the half-winding number $\left( \bm{\vartheta}_t \ ; \ t \geq 0 \right)$ as $\pi$ times a Poisson process on $\R_+$ and intensity $2w^2$. If $\theta_0 \in [0, \pi)$, then the initial position $\bm{\vartheta}_0$ takes values in $\{0, \pi\}$ almost surely with:
$$ \P\left( \bm{\vartheta}_0 = 0 \right) = 1 - \P\left( \bm{\vartheta}_0 = \pi \right) = \sin^2\left( \frac{\theta_0}{2} \right) \ .$$

On the other hand, the spike process is the {\it set-valued} random path $\Theta: \R_+ \rightarrow \Pc\left( \R \right)$ obtained as follows:
\begin{itemize}
\item Sample a random initial segment as
$$
   \Theta_0 = \left\{
\begin{array}{ll}

[0,  Z] \textrm{ when $\bm{\vartheta}_0 = 0$ }  \ , \\

[Z,\pi] \textrm{ when $\bm{\vartheta}_0 = \pi$} \ .

\end{array}
   \right.
$$
In the first case, pick $Z$ according to
$$ 
   \P\left( Z \in dz \ | \ \bm{\vartheta}_0 = 0 \right)
 = \tan^2\left( \frac{\theta_0}{2} \right) \mathds{1}_{\{\theta_0 < z < \pi\}}
   \frac{2 \sin(z)}{\left( 1 - \cos(z) \right)^2} dz \ ,
$$
while in the second case:
$$ 
   \P\left( Z \in dz \ | \ \bm{\vartheta}_0 = \pi \right)
 = \cotan^2\left( \frac{\theta_0}{2} \right) \mathds{1}_{\{0 < z < \theta_0 \}}
   \frac{2 \sin(z)}{\left( 1 + \cos(z) \right)^2} dz \ .
$$
\item Sample $\left( t, M_t \right)$ following a Poisson point process on $\R_+ \times [0,\pi]$ with intensity
$$ \left( 2w^2 dt \otimes dm \frac{\sin\left( m \right)}{\left( 1 - \cos(m) \right)^2} \mathds{1}_{\{ 0 \leq m < \pi \}} \right) \ .$$
\item Finally
$$
   \Theta_t = \left\{
\begin{array}{ccc}
\ \bm{\vartheta}_t + [0, M_t]        & \textrm{ if } &  \bm{\vartheta}_t = \bm{\vartheta}_{t^-} \ ,\\
\ \bm{\vartheta}_{t^-} + [0, \pi]    & \textrm{ if } & \bm{\vartheta}_t \neq \bm{\vartheta}_{t^-} \ .
\end{array}
   \right.
$$
\end{itemize}

The analogue of Theorem \ref{thm:limit_thm_1d} is:
\begin{thm}[Rabi oscillation]
\label{thm:rabi}
For any $\gamma>0$ consider the process $\left( \theta_t^\gamma, \ t \geq 0 \right)$, unique strong solution of Equation \eqref{eq:base} starting from a given initial condition $\theta_0$ independent of $\gamma$. All these processes can be coupled for different $\gamma>0$ with $(\bm{\vartheta}, \Theta)$ so that the following two convergences hold in probability \footnote{Notice that unlike Theorem \ref{thm:limit_thm_1d}, we do not have almost sure convergence.}.

On the one hand, for every continuous and compactly supported function $f$ on $\R_+ \times \R$:
\begin{align}
\label{eq:limit_process_xb_rabi}
   \lim_{\gamma \rightarrow \infty}
   \int_0^\infty f( t, \theta_t^\gamma) dt
   =
   \int_0^\infty f( t, \bm{\vartheta}_t ) dt
   \ .
\end{align}

On the other hand, we have for all $H>0$ {\it the Hausdorff convergence of graphs}:
\begin{align}
\label{eq:limit_process_X_rabi}
   \left( \theta_t^\gamma \ ; \ 0 \leq t \leq H \right)
   \stackrel{\gamma \rightarrow \infty}{\longrightarrow}
   \left( \Theta_t \ ; \ 0 \leq t \leq H \right)
   \ .
\end{align}
   
\end{thm}

Note that the intensity of the half-winding number $\bm{\vartheta}$ is $2w^{2}$, while the value of the drift of SDE \eqref{eq:def_rabi} in $0$ or $\pi$ is $w\sqrt{\gamma}$, which diverges for large $\gamma$. This shows that not only the details of the proof are different compared to Theorem \ref{thm:limit_thm_1d}, but that the context of Rabi measurements is also very different at the physical level. 

If the term $w\sqrt{\gamma}$ in Eq. \eqref{eq:def_rabi} is substituted by $w\gamma^{a}$ with $a<\frac{1}{2}$, then the proof adapts and the limiting process $\bm{\vartheta}$ is constant. This is also seen in taking $w$ to zero in the previous Theorem. Such a freezing is known as the quantum Zeno effect \cite{MS77}. If on the other hand $a>\half$, there is no limit. We have thus studied the only relevant case.

\bigskip

The general structure of the proof for Theorem \ref{thm:rabi} goes as follows. In Subsection \ref{subsection:rabi_1segment}, we study the process $\theta^\gamma$ on $[0, \pi]$ until it hits $\pi$. Finally, we conclude in Subsection \ref{subsection:rabi_gluing} with the delicate issue of slicing trajectories into portions on $[0, \pi] + \pi \Z$, taking the limits and then gluing everything together.

\subsection{Studying the process on \texorpdfstring{$[0, \pi]$}{[0, pi]}}
\label{subsection:rabi_1segment}

Consider the process 
$$ \left( \theta_t^\gamma \ ; \ 0 \leq t \leq \tau_\pi(\theta^\gamma) \right) \ ,$$
unique strong solution of Equation \eqref{eq:def_rabi} starting from $\theta_0 \in (0, \pi)$. We start again by coupling all the solutions for different $\gamma>0$ thanks to a single Brownian motion $\beta$. This is done by considering the scale function $h_\gamma$ defined by \eqref{eq:scaleRabi}, and writing thanks to the Dambis-Dubins-Schwarz Theorem:
$$ h_\gamma(\theta_0) + \beta_{T_t} = h_\gamma \left( \theta_t^\gamma \right) \ ,$$
where $T_t:=T_t^{\gamma}$ is a suitable time-change. Here, we shall prove:
\begin{thm}[Rabi oscillation]
\label{thm:rabi2}
The following two statements hold almost surely.

On the one hand, for every continuous and compactly supported function $f$ on $\R_+ \times \R$:
\begin{align}
\label{eq:limit_process_xb_rabi2}
   \lim_{\gamma \rightarrow \infty}
   \int_0^{\tau_\pi(\theta^\gamma)} f( t, \theta_t^\gamma) dt
   & =
   \int_0^{\tau_\pi(\bm{\vartheta})} f( t, \bm{\vartheta}_t ) dt
   \ .
\end{align}

On the other hand, we have {\it the Hausdorff convergence of graphs}:
\begin{align}
\label{eq:limit_process_X_rabi2}
   \left( \theta_t^\gamma \ ; \ 0 \leq t \leq \tau_\pi(\theta^\gamma) \right)
   \stackrel{\gamma \rightarrow \infty}{\longrightarrow}
   \left( \Theta_t \ ; \ 0 \leq t \leq \tau_\pi(\bm{\vartheta}) \right)
   \ .
\end{align}

\end{thm}

First, let us study the scale function and describe some qualitative properties of the solutions.

\subsubsection{Rescaling space and time}

\begin{itemize}

\item{\bf Scale function:} For this problem, the scale function $h_\gamma: (0, \pi] \rightarrow \R$ satisfies
$$ \half \gamma \sin^2 (\theta) h_\gamma''(\theta)
 + \left( -\frac{\gamma}{4} \sin(2\theta) + \sqrt{\gamma} w \right) h_\gamma'(\theta) = 0
\ ,$$
or equivalently after using the duplication formula 
$$\sin(2\theta) = 2 \cos(\theta) \sin(\theta) \ ,$$
we have:
$$ \frac{h_\gamma''}{h_\gamma' }\, (\theta)
 = \cotan(\theta) - \frac{2w}{\sqrt\gamma} \frac{1}{\sin^2(\theta)}
\ .$$
Recall that a scale function is unique up to affine transformations. As such, we can choose $h_\gamma(\pi)=1$ and $h_\gamma^\prime (\pi)=0$. Upon solving that ODE with conveniently chosen constants, we have:
\begin{align}
\label{eq:scaleRabi}
    h_\gamma(\theta)
= & \ 1
    - \half \int_{\theta}^\pi d\varphi \sin(\varphi) e^{\frac{2w}{\sqrt\gamma} \cotan(\varphi) } \ .
\end{align}

\medskip

\item {\bf Time change:} As before, we write:
$$ \forall \,  t \in \left[0, \tau_\pi \left( \theta^\gamma \right)\right], \quad
   \theta_t^\gamma
 =  \left( h_\gamma^{\langle -1 \rangle} \circ h_\gamma \right) \left( \theta_t^\gamma \right)
 = h_\gamma^{\langle -1 \rangle}\left( h_\gamma(\theta_0) + \beta_{T_t} \right) \ ,
$$
where $\beta$ is a Brownian motion starting from zero. The time change is:
$$ T_t:=T_t^\gamma = \gamma \int_0^t \left( h_\gamma'( \theta^\gamma_s ) \right)^2 
                                \sin^2(\theta^\gamma_s) \ ds \ .
$$
Notice that this time, we stop the process $\theta^\gamma$ upon hitting $\pi$, which amounts to stopping $h_\gamma(\theta_0) + \beta$ upon hitting $h_\gamma(\pi) = 1$. The previous structure of proof still applies modulo technical details. To see that, we need to check that the inverse time-change behaves as expected. We write $\tau = \tau_\pi\left( h_\gamma(\theta_0) + \beta \right)$ and consider the inverse time-change up to $\tau$ only:
$$ \left( T_\ell^{\langle -1 \rangle}
   \ ; \ \ell < \tau
   \right) \ .$$
In this range, we have:

\begin{align}
\label{eq:time_change_rabi}
   dT_\ell^{\langle -1 \rangle}
 = & \ 
   \frac{d\ell}
        {\gamma \left[ \left( h_\gamma' \circ h_\gamma^{\langle -1 \rangle} \right)( h_\gamma(\theta_0) + \beta_\ell ) \right]^2 
        \left( \sin^2 \circ \, h_\gamma^{\langle -1 \rangle} \right) ( h_\gamma(\theta_0) + \beta_\ell )  }\\
 \nonumber
 =: & \ \varphi_\gamma( h_\gamma(\theta_0) + \beta_\ell ) d \ell \  .
\end{align}

\end{itemize}

\subsubsection{Asymptotics for the changes of scale}

The analogue of Lemma \ref{lemma:asymptotics} in the case of Rabi oscillations is the following. Nevertheless, the computations are markedly different.

\begin{lemma}
\label{lemma:asymptotics_rabi}
On the one hand, the map $h_\gamma: (0, \pi] \rightarrow \R$ defined by \eqref{eq:scaleRabi} has range $(-\infty, 1]$. Its inverse $h_\gamma^{\langle -1 \rangle}$ converges uniformly on $(-\infty, 1)$ to the function $h_\infty^{\langle -1 \rangle}$ defined by:
\begin{align}
\label{eq:cv_h_rabi}
 h_\infty^{\langle -1 \rangle}(y) & 
:= \lim_{\gamma \rightarrow \infty} h_\gamma^{\langle -1 \rangle}(y)
 = \left\{
   \begin{array}{ccc}
   0 & \textrm{ if }  & y \leq 0       \\
   2 \arcsin \sqrt{y} & \textrm{ if } & 0 \leq y < 1\\
   \end{array}
   \right.
   \ .
\end{align}

On the other hand, we have the weak convergence for test functions supported on $(-\infty, 1)$:
\begin{align}
\label{eq:cv_varphi_rabi}
   \varphi_\gamma & 
   \stackrel{\gamma \rightarrow \infty}{ \longrightarrow }
   \frac{1}{2 w^2} \delta_0  \ .
\end{align}

\end{lemma}
\begin{proof}
From the expression \eqref{eq:scaleRabi} of the scale function
$$ \lim_{\theta \rightarrow 0} h_\gamma(\theta) = -\infty
   \ \ \textrm{and} \ \ 
   h_\gamma(\pi) = 1
$$
for every fixed $\gamma$. As such, $h_\gamma: (0, \pi] \rightarrow (-\infty, 1]$ is an increasing diffeomorphism. Because of the limit
$$
\lim_{\gamma \rightarrow \infty}
h_\gamma(\theta) = 
  \half - \half \cos(\theta)
  = \sin^2 \left( \frac{\theta}{2} \right)
  =: h_\infty(\theta) \ ,
$$
which is uniform on compact subsets of $(0, \pi]$, we readily have the convergence of the inverse to:
$$ \forall y \in [0, 1], \ h_\infty^{\langle -1 \rangle}(y) = 2 \arcsin \sqrt{y} .$$
The limit for $y \leq 0$ follows and uniform convergence is obtained via Dini's Theorem. We have proved Eq. \eqref{eq:cv_h_rabi}.

Now let us move to Eq. \eqref{eq:cv_varphi_rabi} by considering a smooth function $f$ with compact support in $(-\infty,1)$. More precisely, we consider $\varepsilon>0$ and $\gamma$ large enough so that $h_\gamma([\pi-\varepsilon, \pi]) \notin \textrm{supp} f$. In this case:
\begin{align*}
     \int_{\R}  \varphi_\gamma(a) f(a)\, da
 = & 2 \int_{0}^\pi \ h_\gamma'(\theta)
                           \frac{(f \circ h_\gamma) (\theta) }
                                {\gamma h_\gamma'(\theta)^2 \sin^2(\theta) }\, d\theta \\
 = & 2 \int_{0}^\pi  \frac{(f \circ h_\gamma) (\theta) }
                                {\gamma e^{\frac{2 w}{\sqrt\gamma} \cotan(\theta)} \sin^3(\theta) } \, d\theta \\
 = & 2 \int_{0}^{\pi-\varepsilon}
     \frac{( f \circ h_\gamma) (\theta) }
                                {\gamma e^{\frac{2 w}{\sqrt\gamma} \cotan(\theta)} \sin^3(\theta) } \, d\theta \\
 = & o_\gamma(1) +
     2 \int_{0}^{\frac{\pi}{2}}
     \frac{( f \circ h_\gamma) (\theta) }
                                {\gamma e^{\frac{2 w}{\sqrt\gamma} \cotan(\theta)} \sin^3(\theta) } \, d\theta \ .                                
\end{align*}
Here $o_\gamma(1)$ is a quantity which goes to zero as $\gamma \rightarrow \infty$. Again via the change of variable $u=e^{-\frac{2 w}{\sqrt\gamma} \cotan(\theta)}$, which is equivalent to $\theta=q_\gamma(u)=\arctan(-\frac{2w}{\sqrt \gamma\ln(u)})$, we have:
\begin{align*}
    \int_{\R}  \varphi_\gamma(a) f(a)\, da
& = o_\gamma(1) +
     \int_{0}^{\frac{\pi}{2}}
     \cfrac{d}{d\theta} \left( e^{-\frac{2 w}{\sqrt\gamma} \cotan(\theta)} \right)
              \frac{(f \circ h_\gamma )(\theta) }
                   {w \sqrt{\gamma} \sin(\theta) }\, d \theta\\
& = o_\gamma(1)
    + \int_{0}^{1}  \ \frac{(f \circ h_\gamma \circ q_\gamma) (u) }
                             {w\sqrt{\gamma} (\sin \circ  q_\gamma )(u) }\, du \ .
\end{align*}
Now let us look at $(h_\gamma \circ q_\gamma) (u)$ for $u\in (0,1)$. Since $q_\gamma(u) \stackrel{\gamma \rightarrow \infty}{\longrightarrow} 0$, we have:
$$ 1 = o_\gamma(1) + \half \int_{q_\gamma(u)}^\pi \ \sin(\varphi) \, d\varphi \ .$$
As such, we have
\begin{align*}
 (h_\gamma \circ q_\gamma) (u)
= & 1 - \half \int_{q_\gamma(u)}^\pi \sin(\varphi) e^{\frac{2w}{\sqrt\gamma} \cotan(\varphi) }\, d\varphi \\
= & - \half \int_{q_\gamma(u)}^\pi \sin(\varphi) \left( e^{\frac{2w}{\sqrt\gamma} \cotan(\varphi) } - 1 \right) \, d\varphi  + o_\gamma(1)\\
= & - \half \int_{q_\gamma(u)}^{\frac\pi2}\sin(\varphi) \left( e^{\frac{2w}{\sqrt\gamma} \cotan(\varphi) } - 1 \right)\, d\varphi  + o_\gamma(1) \\
= & - \half \int_{0}^{\frac\pi2} \sin(\varphi) \mathds{1}_{\{ q_\gamma(u) \leq \varphi \}}
      \left( e^{\frac{2w}{\sqrt\gamma} \cotan(\varphi) } - 1 \right) \, d\varphi + o_\gamma(1) \ .
\end{align*}
Notice that the integrand converges Lebesgue almost everywhere to zero and that it is dominated by:
\begin{align*}
   \sup_{\varphi \in [q_\gamma(u), \frac\pi2]}
        \left| \sin(\varphi) \mathds{1}_{\{ q_\gamma(u) \leq \varphi \}}
               \left( e^{\frac{2w}{\sqrt\gamma} \cotan(\varphi) } - 1 \right) \right|
   \leq & \ \left| e^{ \frac{2w}{\sqrt\gamma} (\cotan \circ q_\gamma) (u) } - 1 \right| \\
   \leq & \ \exp\left( \frac{2w}{\sqrt\gamma} \cdot \frac{-\sqrt{\gamma} \ln u}{2w} \right) + 1 \\
   =    & \ u^{-1} + 1 \ .
\end{align*}
By Lebesgue's Dominated Convergence Theorem, we deduce that:
$$ \forall u \in (0,1), \quad \lim_{\gamma \rightarrow \infty} (h_\gamma \circ q_\gamma) (u) = 0 \ .$$
Moreover:
$$ \sqrt{\gamma} (\sin \circ q_\gamma) (u) 
 = \sqrt{\gamma} (\sin \circ \arctan) \left( -\frac{2w}{\sqrt \gamma\ln(u)} \right)
 \stackrel{\gamma \rightarrow \infty}{\sim} -\frac{2w}{\ln(u)}
   \ .$$
As a consequence, by combining the two previous limits and dominated convergence, we have:
\begin{align*}
\int_{\R} \varphi_\gamma(a) f(a) da
& = o_\gamma(1)
    + \int_{0}^{1}  \ \frac{f \circ h_\gamma \circ q_\gamma(u) }
                             {w\sqrt{\gamma}\sin \circ q_\gamma(u) } \, du\\
& \stackrel{\gamma \rightarrow \infty}{\longrightarrow} f(0) \int_0^1 \frac{-\ln u}{2w^2}\, du\\
& = \frac{f(0)}{2w^2} \ .
\end{align*}
\end{proof}

\subsubsection{The limiting process \texorpdfstring{$\bm{\vartheta}$}{xb} and proof of Eq. \texorpdfstring{\eqref{eq:limit_process_xb_rabi2}}{}}

We just have proven all the ingredients in order to obtain the convergence to a jump Markov process upon smoothing. This is done more or less along the same lines as Subsection \ref{subsection:limiting_xb}. Here are the details.

First, recall that 
$$ \forall \,  t \in \left[0, \tau_\pi \left( \theta^\gamma \right)\right], \quad \theta_t^\gamma
 = (h_\gamma^{\langle -1 \rangle} \circ h_\gamma) ( \theta_t^\gamma )
 = h_\gamma^{\langle -1 \rangle}( h_\gamma(\theta_0) + \beta_{T_t} ) \ ,
$$
with $\beta_0 = 0$. As before, we couple the different processes $\left( \theta^\gamma \ ; \ \gamma > 0 \right)$ by using the same Brownian motion $\beta$. Also, recall that stopping $\theta^\gamma$ at $\pi$ amounts to stopping $h_\gamma(\theta_0) + \beta$ at $1 = h_\gamma(\pi)$. For $\ell<\tau_1(\beta)$, thanks to the occupation time formula, the inverse time is:
\begin{align}
\label{eq:T_inv_rabi}
     T_\ell^{\langle -1 \rangle}
 = \int_0^\ell \varphi_\gamma( h_\gamma(\theta_0) + \beta_\ell ) \, d \ell 
 = \int_{(-\infty,1]} \varphi_\gamma(a) L_\ell^a( h_\gamma(\theta_0) + \beta) \, da \  .
\end{align}

In fact, $T_\ell^{\langle -1 \rangle}<\infty$ while $\ell<\tau_1(h_\gamma(\theta_0) + \beta)$. This is because $\varphi_\gamma$ remains bounded away from $1$. The natural convention is to consider that $T_\ell^{\langle -1 \rangle} = \infty$ for $\ell=\tau_1(h_\gamma(\theta_0) + \beta)$, since $T$ will become flat by stopping $\theta^\gamma$ after $\tau_\pi(\theta^\gamma)$. In any case, notice now that $\left( L_\ell^a( h_\gamma(\theta_0) + \beta) \ ; \ a \in \R \right)$ is strongly convergent as $\gamma \rightarrow \infty$ since local time is continuous and compactly supported in $a \in \R$. Adding to that $\varphi_\gamma$ converges weakly via Lemma \ref{lemma:asymptotics_rabi}, we have the almost sure convergence:
\begin{align*}
  & \left( T^{\langle -1 \rangle}_\ell \ ; \ 0 \leq \ell < \tau_1(h_\gamma(\theta_0) + \beta) \right) \\
\stackrel{\gamma \rightarrow \infty}{\longrightarrow}
  & \left( \sigma^{\langle -1 \rangle}_\ell := \frac{L^{0}_\ell(h_\infty(\theta_0) + \beta)}{2 w^2} \ ; \ 0 \leq \ell < \tau_1(h_\infty(\theta_0) + \beta) \right) \ .
\end{align*}

In passing, let us notice that $T_{\tau_\pi(\theta^\gamma)} = \tau_1(h_\gamma(\theta_0)+\beta)$ which leads to following, which we record for later use:
\begin{lemma}
\label{lemma:cv_time}
$$ \tau_\pi\left( \theta^\gamma \right)
   \stackrel{\gamma \rightarrow \infty}{\longrightarrow}
   \frac{1}{2w^2}
   L^{0}_{\tau_1(h_\infty(\theta_0) + \beta)}(h_\infty(\theta_0) + \beta) \ .
$$
\end{lemma}

Then, taking $f$ compactly supported and performing a change of variable, we have:
\begin{align*}
    \int_0^{\tau_\pi(\theta^\gamma)}f\left( t, \theta_t^\gamma \right) dt
= & \int_0^{\tau_\pi(\theta^\gamma)}f\left( t, h_\gamma^{\langle -1 \rangle}( h_\gamma(\theta_0) + \beta_{T_t} ) \right) dt\\
= & \int_0^{\tau_1(h_\gamma(\theta_0)+\beta)}f\left( T^{\langle -1 \rangle}_{\ell}, h_\gamma^{\langle -1 \rangle}( h_\gamma(\theta_0) + \beta_\ell ) \right)
                  d T^{\langle -1 \rangle}_{\ell} \\
\stackrel{\gamma \rightarrow \infty}{\longrightarrow}
    & \int_0^{\tau_1(h_\infty(\theta_0)+\beta)}f ( \sigma^{\langle -1 \rangle}_\ell, h_\infty^{\langle -1 \rangle}(h_\infty(\theta_0)  + \beta_\ell) ) \, d \sigma^{\langle -1 \rangle}_\ell \ .
\end{align*}
Thanks to standard martingale arguments, the Brownian motion $h_\infty(\theta_0) + \beta$ reaches $1$ before $0$ with probability $\sin^2\left( \frac{\theta_0}{2} \right)$. On the corresponding event $\{\tau_1 (h_\infty(\theta_0)+\beta) < \tau_0 (h_\infty(\theta_0)+\beta)\}$ the above integral is thus zero. By setting $\bm{\vartheta}_0=\pi$ on this event, we have indeed on the latter (since $\tau_\pi (\bm{\vartheta})=0$):
$$ \int_0^{\tau_1(h_\infty(\theta_0) + \beta)}f ( \sigma^{\langle -1 \rangle}_\ell, h_\infty^{\langle -1 \rangle}(h_\infty(\theta_0)+\beta_\ell) ) \, d \sigma^{\langle -1 \rangle}_\ell
 = 0 =  \int_0^{\tau_\pi(\bm{\vartheta})} f(t, \bm{\vartheta}_t) dt \ .$$

With probability $1-\sin^2\left( \frac{\theta_0}{2} \right)$, $h_\infty(\theta_0)+\beta$ reaches $0$ first. On the corresponding event $\{\tau_0 (h_\infty(\theta_0)+\beta) < \tau_1 (h_\infty(\theta_0)+\beta)\}$, we set $\bm{\vartheta}_0=0$. Thanks to the Markov property and the fact that $\sigma^{\langle -1 \rangle}$ is supported on $\{ \ell \geq 0 \ | \ h_\infty(\theta_0)+\beta_\ell = 0 \}$, we can suppose that $\beta$ starts at zero in the above limit which will allow lighter notations. By performing a change of variable again, the latter expression equals
$$ \int_0^{\tau_1(\beta)}f ( \sigma^{\langle -1 \rangle}_\ell, 0 ) \, d \sigma^{\langle -1 \rangle}_\ell
 = \int_0^{\frac{L^0_{\tau_1(\beta)}(\beta)}{2w^2}} f ( t, 0 ) dt \ .$$
Thanks to the first Ray-Knight Theorem \cite[Chapter XI, Theorem 2.2]{RY}, the accumulated local time $L_{\tau_1(\beta)}^0(\beta)$ is the square of a two dimensional Bessel process at time $1$, or equivalently $2{\mathcal E} (1)$ where $\mathcal E (1)$ is a standard exponential random variable. In the end, $\frac{L^0_{\tau_1(\beta)}(\beta)}{2w^2} = \frac{ {\mathcal E} (1) }{w^2}$. This is exactly the waiting time for the Poisson process $\bm{\vartheta}$ to jump from the state $0$ to the state $\pi$. Thus, with such a coupling, we have indeed:
$$ \int_0^{\tau_1(\beta)}f ( \sigma^{\langle -1 \rangle}_\ell, h_\infty^{\langle -1 \rangle}(\beta_\ell) )\,  d \sigma^{\langle -1 \rangle}_\ell
 = \int_0^{\tau_\pi(\bm{\vartheta})} f(t, \bm{\vartheta}_t) dt \ .$$
 This concludes the proof of the first part of Theorem \ref{thm:rabi2}.
 
\subsubsection{The limiting process \texorpdfstring{$\Theta$}{Theta} and proof of Eq. \texorpdfstring{\eqref{eq:limit_process_X_rabi2}}{}}
For $\gamma>0$, let $A^\gamma$ be the graph of $\theta^\gamma$ until it reaches $\pi$. Via time change we have:
\begin{align*}
A^\gamma
  & = \left\{ (t, \theta_t^\gamma) \ | \ 0 \leq t \leq \tau_\pi(\theta^\gamma) \right\} \\
  & = \left\{ \left( T^{\langle -1 \rangle}_\ell, h^{\langle -1 \rangle}_\gamma(h_\gamma(\theta_0) + \beta_\ell) \right) \ | \ 0 \leq \ell \leq \tau_1(h_\gamma(\theta_0) + \beta) \right\} \\
  & \stackrel{\gamma \rightarrow \infty}{\longrightarrow}
      \left\{ \left( \sigma^{\langle -1 \rangle}_\ell, h_\infty^{\langle -1 \rangle}(h_\infty(\theta_0) + \beta_\ell) \right) \ | \ 0 \leq \ell \leq \tau_1(h_\infty(\theta_0) + \beta) \right\} \ .
\end{align*}
the limit holding almost surely in the Hausdorff topology as before. We need to prove that
\begin{align*}
  & \left\{ \left( \sigma^{\langle -1 \rangle}_\ell, h_\infty^{\langle -1 \rangle}(h_\infty(\theta_0) + \beta_\ell) \right)
           \ | \ 0 \leq \ell \leq \tau_1(h_\infty(\theta_0) + \beta)
    \right\} \\
= & \bigsqcup_{0 \leq t \leq \frac{1}{2w^2} L^0_{\tau_1(h_\infty(\theta_0)+\beta)}(h_\infty(\theta_0)+\beta) } \left\{ t \right\} \times B_t
\end{align*}
gives $\Theta$ as in the statement of Theorem \ref{thm:rabi2}. Equivalently, we need to prove that the spikes from the theorem can be identified to
\begin{align*} 
  & B_t\\
= & \left\{ \ h_\infty^{\langle -1 \rangle}(h_\infty(\theta_0)+\beta_\ell) \ | \ \sigma_\ell^{\langle -1 \rangle} = t \right\} \\
= & \left\{ \ 2 \arcsin\left( \sqrt{h_\infty(\theta_0)+\beta_\ell} \right) \ \mathds{1}_{\{ 0 \leq h_\infty(\theta_0) + \beta_\ell \leq 1 \}}
                \ | \ \frac{L_\ell^0(h_\infty(\theta_0)+\beta)}{2 w^2} = t \right\} \ .
\end{align*}

Let us start with analyzing $B_t$ for $t>0$. In that endeavor, we set a few notations and simplifications. Since the local time $L_\ell^0(h_\infty(\theta_0)+\beta)$ needs to be non-negative for $t>0$, the process $h_\infty(\theta_0)+\beta$ has passed by zero. Without loss of generality, we can assume that $h_\infty(\theta_0)+\beta$ starts from zero by the Markov property. If $H = L^0(\beta)^{\langle -1 \rangle}$ is the inverse of local time, then the excursion of $\beta$ at (inverse local) time $t$ is:
$$ e_t = \left\{ \ e_t(\ell) = \beta_{\ell+H_t} \ | \ 0 \leq \ell \leq H_{t} - H_{t^-} \right\} \ .$$
Also, its maximum is denoted by $\widetilde{M}_t := \sup_\ell e_t(\ell)$. We see that $B_t$ is defined in terms of the excursion $e_{2 w^2 t}$. It is clear  that $B_t$ is a segment in the form $[0,b_t]$ that can be expressed in terms of $\widetilde{M}$:
\begin{itemize}
\item Either $\widetilde{M}_{2w^2 t} = 0$, so that $e_{2 w^2 t}$ is negative. In this case, $b_t=0$.
\item or $\widetilde{M}_{2w^2 t} > 1$, so that $t = \tau_1(\beta)$ and $b_t=\pi$ is the last spike. 
\item or $0 < \widetilde{M}_{2w^2 t} < 1$, so that:
\begin{align*} 
b_t =2 \arcsin\left( \sqrt{\widetilde{M}_{2w^2 t}} \right) \ .
\end{align*}
\end{itemize}

Now, let us describe $(t, b_t)$ as a Poisson point process.  Recall that, $(t, \widetilde{M}_t)$ is a Poisson point  process with intensity $dt \otimes \half \frac{dm}{m^2} \mathds{1}_{m>0}$ from the description of the Itô measure and the decoration of positive excursions by maxima \cite[Chapter XII, Theorem 4.5]{RY}. Therefore the intensity is obtained by computing for a positive function $f$:
\begin{align*} 
    \E\left[ \sum_{0 < s \leq t} f\left(s, b_s\right) \right]
= & \E\left[ \sum_{0 < s \leq t} f\left(s, 2 \arcsin\left( \sqrt{\widetilde{M}_{2w^2 s}} \right) \right) \mathds{1}_{\{ 0 < \widetilde{M}_{2w^2 s} < 1 \}} \right]\\
= & \E\left[ \sum_{0 < s \leq 2w^2 t} f\left( \frac{s}{2w^2}, 2 \arcsin\left( \sqrt{\widetilde{M}_s} \right) \right) \mathds{1}_{\{ 0 < \widetilde{M}_s < 1 \}} \right]\\
= & \int_{s=0}^{2w^2 t} \int_{m=0}^1 \cfrac{1}{2 m^2}\,  f\left( \cfrac{s}{2w^2}, 2 \arcsin \sqrt{m} \right) \; ds \, dm\\
= & \int_{s=0}^t \int_{m=0}^\pi  \frac{2 w^2 \sin (m)}{\left( 1 - \cos(m) \right)^2}\,  f(s, m) \; ds\, dm\ .
\end{align*}
We are done with the description of $B_t$, $t>0$. Only the initial random segment $B_0$ needs to be described, and it coincides with the segment spanned by $h_\infty^{\langle -1 \rangle}(h_\infty(\theta_0)+\beta)$ until $\beta$ hits $\{0,1\}$. Using the same random variable $Y$ as in Proposition \ref{proposition:equivalent_MC}, we have that:
$$
   B_0 = \left\{
\begin{array}{ll}

[0,  2 \arcsin \sqrt{Y}] & \textrm{ if } \tau_0(h_\infty(\theta_0)+\beta) < \tau_1(h_\infty(\theta_0)+\beta) \ , \\

[2 \arcsin \sqrt{Y}, \pi] & \textrm{ otherwise } \ . \\
\end{array}
   \right.
$$
In the first case, we set $\bm{\vartheta}_0=0$ and this occurs with probability 
$$\P\left[ \tau_0(h_\infty(\theta_0)+\beta) < \tau_1(h_\infty(\theta_0)+\beta) \right] = \sin^2\left( \frac{\theta_0}{2} \right) \ .$$
In the second case, we set $\bm{\vartheta}_0=\pi$. Setting $Z = 2 \arcsin \sqrt{Y}$, and computing its density for each case yields the result i.e. the description of the initial spike $\Theta_0$. This concludes the proof of the second part of Theorem \ref{thm:rabi2}, and thus the goal of this subsection.\\

The full proof of Theorem \ref{thm:rabi} requires taking care of the following technicality. We will need to glue together trajectories belonging to $k\pi + [0, \pi]$ for different $k \in \Z$.

\subsection{Slicing and gluing trajectories}
\label{subsection:rabi_gluing}

Thanks to the remark made in Eq. \eqref{eq:periodicity_rabi}, the study of the full process can be decomposed into intervals of length $\pi$. Since the scale function is only defined on $(0, \pi]$ while the process $\theta^\gamma$ does cross $\pi$, the change of space and time does not work beyond that point. Likewise, it does not work when $\theta^\gamma \in \pi \Z$. Therefore, we need to treat the different portions of the trajectory separately. To simplify notation we denote the hitting times $ \tau_a = \tau_a(\theta^\gamma)
 = \inf\{ t \geq 0 \ | \ \theta^\gamma_t \geq a \} $
without the explicit reference to $\theta^\gamma$.

\subsubsection{Coupling}
The coupling of the processes $\theta^\gamma$ for different $\gamma>0$ requires some care, as we shall consider the segments $k\pi + [0, \pi]$ separately for different $k \in \Z$. Furthermore, we also need an epsilon of room $\varepsilon>0$, in order to avoid $0^+ + \pi \Z$. We start from independent Brownian motions $\left( \beta^{k} \ ; k \in \Z \right)$. For each of those, we consider the time change $T^k$ whose inverse is defined exactly as in Eq. \eqref{eq:time_change_rabi} via:
$$
   d\left( T^k \right)^{\langle -1 \rangle}_\ell
 = \varphi_\gamma( h_\gamma(\varepsilon) + \beta^k_\ell ) d \ell \  .
$$
As such, we unambiguously define portions of trajectory of $\theta^\gamma$ via:
\begin{align}
\label{eq:coupling_rabi}
\theta^\gamma_{\tau_{k\pi+\varepsilon} + t} = & k \pi + h_\gamma^{\langle -1\rangle}\left( h_\gamma(\varepsilon) + \beta^{k}_{T_t^k} \right) \ .
\end{align}
for $k\in \Z$ and $0 \leq t \leq \tau_{(k+1)\pi}-\tau_{k\pi+\varepsilon}$. For the portions of the trajectory with $t \in \left[ \tau_{k\pi}, \tau_{k\pi+\varepsilon} \right]$, we simply solve the original SDE \eqref{eq:def_rabi} driven by a Brownian motion independent from the $\left( \beta^{k} \ ; k \in \Z \right)$. By doing so, we have thus defined a family of processes $\theta^\gamma$ indexed by all of $t \in \R_+$, where each portion living on $\left[ \tau_{k\pi+\varepsilon}, \tau_{(k+1)\pi} \right]$ is coupled for different $\gamma>0$ thanks to independent Dambis-Dubins-Schwarz Brownian motions.

Notice that if we had first solved the SDE \eqref{eq:def_rabi} and then invoked Eq. \eqref{eq:coupling_rabi} to define the Brownian motions $\beta^{k}$, the dependence in $\varepsilon>0$ would not be tractable. Indeed, the outcome of the Dambis-Dubins-Schwarz Theorem is very sensitive to choice of filtration given for different $\varepsilon>0$.

\subsubsection{The jump process \texorpdfstring{$\bm{\vartheta}$}{x}}
For the proof of Eq. \eqref{eq:limit_process_xb_rabi}, there is no loss of generality in assuming that $f$ is smooth. Moreover, since $f$ has compact support, there exist $T>0$ and $K \in \N$ such that
$$ \textrm{supp} f \subset [0,T] \times [0, (K+1) \pi] \ .$$
We slice the integral into portions corresponding to the different intervals $k\pi + [0, \pi]$:
\begin{align*}
  & \int_0^\infty f\left( t, \theta_t^\gamma \right) dt \\
= & \sum_{k=0}^K
    \int_{\tau_{k\pi}}^{\tau_{(k+1)\pi}}
    f\left( t, \theta^\gamma_t \right) dt\\
= & \sum_{k=0}^K
    \int_{\tau_{k\pi}}^{\tau_{k\pi+\varepsilon}}
    f\left( t, \theta_t^\gamma \right) dt
    +
    \int_{\tau_{k\pi+\varepsilon}}^{\tau_{(k+1)\pi}}
    f\left( t, \theta_t^\gamma \right) dt
    \\
= & \sum_{k=0}^K
    \int_{\tau_{k\pi}}^{\tau_{k\pi+\varepsilon}}
    f\left( t, \theta_t^\gamma\right) dt \\
  & \quad 
    +
    \int_{0}^{\tau_{(k+1)\pi}-\tau_{k\pi+\varepsilon}}
    f\left( \tau_{k\pi+\varepsilon}+t, k \pi + h_\gamma^{\langle -1 \rangle}( h_\gamma(\varepsilon) + \beta_{T_t^k}^k ) \right) dt
    \\
= & \Oc\left( \sum_{k=0}^{K} (\tau_{k\pi+\varepsilon}-\tau_{k\pi}) \|f\|_\infty \right) \\
  & \quad +
	\sum_{k=0}^K
    \int_{0}^{\tau_{(k+1)\pi}-\tau_{k\pi+\varepsilon}}
    f\left( \tau_{k\pi+\varepsilon}+t, k \pi + h_\gamma^{\langle -1 \rangle}( h_\gamma(\varepsilon) + \beta_{T^k_t}^k ) \right) dt
    \ .
\end{align*}
Now, define the sequence of times
$$ \sigma_k^{\gamma, \varepsilon} := \sum_{j < k} \tau_{(j+1)\pi}-\tau_{j\pi+\varepsilon} \ .$$
By adding to $\sigma_k^{\gamma, \varepsilon}$ the duration of the intervals $\left( \left[ \tau_{j\pi}, \tau_{j\pi+\varepsilon} \right] \ ; \ j \leq k \right)$, we obtain $\tau_{k\pi+\varepsilon}$ as a telescoping sum and as such:
$$ \forall k \in \{0, \dots, K\},
 \ \left| \sigma_k^{\gamma, \varepsilon} - \tau_{k\pi+\varepsilon} \right|
   \leq \sum_{k=0}^{K} (\tau_{k\pi+\varepsilon}-\tau_{k\pi}) \ .$$
Thanks to the Mean Value Theorem, we obtain:
\begin{align*}
  & \int_0^\infty f\left( t, \theta_t^\gamma \right) dt \\
= & \Oc\left( \sum_{k=0}^{K} (\tau_{k\pi+\varepsilon}-\tau_{k\pi})( \|f\|_\infty + T \|\partial_t f\|_\infty ) \right) \\
  & \quad +
	\sum_{k=0}^K
    \int_{0}^{\tau_{(k+1)\pi}-\tau_{k\pi+\varepsilon}}
    f\left( \sigma_k^{\gamma, \varepsilon} +t, k \pi + h_\gamma^{\langle -1 \rangle}( h_\gamma(\varepsilon) + \beta_{T^k_t}^k ) \right) dt
    \ .
\end{align*}

Thanks to the estimate of the next subsection, in Lemma \ref{lemma:estimate}, the first term converges to zero in probability as $\varepsilon \rightarrow 0$, {\it uniformly} in $\gamma$. This is written as a $o_{\varepsilon, \P}(1)$. Applying successively the convergence result for $[0, \pi]$ in Lemma \ref{lemma:cv_time}, we have couplings such that
$$ \tau_{(k+1)\pi}-\tau_{k\pi+\varepsilon} \stackrel{\gamma \rightarrow \infty}{\longrightarrow}
   \frac{1}{2w^2}
   L^{0}_{\tau_1(h_\infty(\varepsilon) + \beta^k)}(h_\infty(\varepsilon) + \beta^k) 
$$
for $k\geq 1$. And
$$ \tau_{\pi} \stackrel{\gamma \rightarrow \infty}{\longrightarrow}
   \frac{1}{2w^2}
   L^{0}_{\tau_1(h_\infty(\theta_0) + \beta^0)}(h_\infty(\theta_0) + \beta^0) \ .
$$
As such, appears the quantity:
$$ \sigma^{\infty, \varepsilon}_k
   := \frac{1}{2w^2}
   L^{0}_{\tau_1(h_\infty(\theta_0) + \beta^0)}(h_\infty(\theta_0) + \beta^0)
   + \sum_{j=1}^k \frac{1}{2w^2}
   L^{0}_{\tau_1(h_\infty(\varepsilon) + \beta^j)}(h_\infty(\varepsilon) + \beta^j) \ .
$$
Continuing the computation, we have:
\begin{align}
\label{eq:rabi_final}
  & \ \int_0^\infty f\left( t, \theta_t^\gamma \right) dt \\
= & \ o_{\varepsilon, \P}(1)
	+
	o_{\gamma}(1)
	+
	\sum_{k=0}^K
    \int_{0}^{\sigma^{\infty, \varepsilon}_{k+1}-\sigma^{\infty, \varepsilon}_k}
    f\left( \sigma^{\infty, \varepsilon}_k +t, k \pi \right) dt
\nonumber
    \ .
\end{align}

Now recall that $\lim_{\varepsilon \rightarrow 0} h_\infty(\varepsilon) = 0$. As such we see that $\left( \sigma^{\infty, \varepsilon}_k \ ; \ k \in \N \right)$ converges to an increasing sequence $\left( \sigma^{\infty, 0}_k \ ; \ k \in \N \right)$. In turn, we can finally construct the Poisson process $\frac{1}{\pi} \bm{\vartheta}$ by taking these as jump times. We conclude by letting $\gamma \rightarrow \infty$, which is an almost sure limit, then $\varepsilon \rightarrow 0$, which is a limit in probability. Eq. \eqref{eq:rabi_final} becomes:
$$
   \P-\lim_{\gamma \rightarrow \infty} \int_0^\infty f\left( t, \theta_t^\gamma \right) dt 
 = \int_{0}^{\infty}
   f\left( t, \bm{\vartheta}_t \right) dt \ .
$$
.

\subsubsection{The spike process \texorpdfstring{$\Theta$}{Theta}}

Most of the work was already done. For $\gamma>0$, consider the graph of $\theta^\gamma$ up to a time horizon $H>0$:
$$
A^\gamma
= \left\{ (t, \theta_t^\gamma) \ | \ 0 \leq t \leq H \right\}
= \bigcup_{k=0}^\infty A^{\gamma}_k \ ,
$$ 
where
$$
   A_k^\gamma : = \left\{ (t, \theta_t^\gamma) \ | \ \tau_{k\pi} \wedge H \leq t \leq \tau_{(k+1) \pi} \wedge H \right\} \ .
$$
Our goal is to show the convergence of $A^\gamma$ to the spike process $\Theta$ restricted to $[0,H]$, which we write $\Theta_{|[0,H]}$. We require a convergence in probability for the Hausdorff distance ${\rm d}_{\H}$ and as such, we need to prove:
\begin{align*}
   \forall \delta>0, \ 
   \lim_{\gamma \rightarrow \infty} \P\left[ {\rm d}_{\H}\left( A^\gamma, \Theta_{|[0,H]} \right) \geq \delta \right]
   = 0 \ .
\end{align*}
By virtue of the treatment in Subsection \ref{subsection:rabi_1segment}, each portion $A_k^\gamma$ converges almost surely when restricted to $[\tau_{k\pi+\varepsilon}, \tau_{(k+1)\pi}]$. Moreover $\tau_{k\pi+\varepsilon}$ is close to $\tau_{k\pi}$ in probability and $\tau_{k\pi} = \tau_{k\pi}(\theta^\gamma)$ converges to $\sigma^{\infty, 0}_k$. Therefore, for fixed $k \in \N$, it is clear that each portion $A_k^\gamma$ converges to 
$$ \Theta^k = \left\{ (t, \Theta_t) \ | \ \sigma^{\infty, 0}_k \wedge H \leq t \leq \sigma^{\infty, 0}_{k+1} \wedge H\right\} \ ,$$
which is a portion of the spike process. These limits are in probability for the Hausdorff distance:
$$ \forall \delta>0, \ \P\left[ {\rm d}_{\H}(A^\gamma_k, \Theta^k) \geq \delta \right] \stackrel{\gamma \rightarrow \infty}{\longrightarrow} 0 \ .$$
By a union bound, we have for every fixed $K \in \N$:
$$ \forall \delta>0, \ \P\left[ \exists \ 0 \leq k \leq K, \ {\rm d}_{\H}\left( A^\gamma_k, \Theta^k \right) \geq \delta \right] \stackrel{\gamma \rightarrow \infty}{\longrightarrow} 0 \ .$$
Taking the complement:
$$ \forall \delta>0, \ \P\left[ \forall \ 0 \leq k \leq K, \ {\rm d}_{\H}\left( A^\gamma_k, \Theta^k \right) \leq \delta \right] \stackrel{\gamma \rightarrow \infty}{\longrightarrow} 1 \ .$$

By definition of the Hausdorff distance in Eq. \eqref{def:hausdorff}:
\begin{align*}
  & \left\{ \forall \ 0 \leq k \leq K, \ {\rm d}_{\H}\left( A^\gamma_k, \Theta^k \right) \leq \delta \right\} \\
= & \left\{ \forall \ 0 \leq k \leq K, \ A^\gamma_k \subset \Theta^k + \delta \B \textrm{ and vice-versa} \right\}\\
\subset
  & \left\{ \bigcup_{k=0}^K A^\gamma_k \subset \bigcup_{k=0}^K \Theta^k + \delta \B \textrm{ and vice-versa} \right\}\\
=  
  & \left\{ {\rm d}_{\H}\left( \bigcup_{k=0}^K A^\gamma_k, \bigcup_{k=0}^K \Theta^k \right) \leq \delta \right\} \ .
\end{align*}
This implies that:
\begin{align}
\label{eq:rabi_spike_cv1}
\forall \delta>0, \ \P\left[ {\rm d}_{\H}\left( \bigcup_{k=0}^K A^\gamma_k, \bigcup_{k=0}^K \Theta^k \right) \leq \delta \right] \stackrel{\gamma \rightarrow \infty}{\longrightarrow} 1 \ .
\end{align}

We conclude by remarking that taking $K \in \N$ large enough exhausts the segment $[0, H]$. This idea is formalized by the following inequalities:
\begin{align*}
      & \ \P\left[ {\rm d}_{\H}\left( A^\gamma, \Theta_{|[0,H]} \right) \geq \delta \right] \\
\leq  & \ \P\left[ \tau_{K\pi}(\theta^\gamma) \wedge \sigma_K^{\infty, 0} \geq H, {\rm d}_{\H}\left( A^\gamma, \Theta_{|[0,H]} \right) \geq \delta \right] \\
      &  \quad \quad \quad + \P\left[ \tau_{K\pi}(\theta^\gamma) \wedge \sigma_K^{\infty, 0} \leq H \right] \\
=     & \ \P\left[ \tau_{K\pi}(\theta^\gamma) \wedge \sigma_K^{\infty, 0} \geq H, {\rm d}_{\H}\left( \bigcup_{k=0}^K A^\gamma_k, \bigcup_{k=0}^K \Theta^k \right) \geq \delta \right] \\
      & \quad \quad \quad + \P\left[ \tau_{K\pi}(\theta^\gamma) \wedge \sigma_K^{\infty, 0} \leq H \right] \\
\leq  & \ \P\left[ {\rm d}_{\H}\left( \bigcup_{k=0}^K A^\gamma_k, \bigcup_{k=0}^K \Theta^k \right) \geq \delta \right] 
        + \P\left[ \tau_{K\pi}(\theta^\gamma) \wedge \sigma_K^{\infty, 0} \leq H \right] \ .
\end{align*}
Thanks to the convergence in Eq. \eqref{eq:rabi_spike_cv1} and the convergence in probability of $\tau_{K\pi}(\theta^\gamma)$ to $\sigma_K^{\infty, 0}$, we have for all $K \in \N$:
\begin{align}
\label{eq:rabi_spike_cv2}
   \limsup_{\gamma \rightarrow \infty} \P\left[ {\rm d}_{\H}\left( A^\gamma, \Theta_{|[0,H]} \right) \geq \delta \right]
   \leq 
   \P\left[ \sigma_K^{\infty, 0} \leq H+1 \right] \ .
\end{align}
Now recall that $\sigma_K^{\infty, 0}$ is the sum of $K$ exponential random variables, as the $K$-th jumping time of a standard Poisson process. Thus it converges to infinity as $K \rightarrow \infty$. Taking $K \rightarrow \infty$ in Eq. \eqref{eq:rabi_spike_cv2} concludes the proof. 

\subsection{An estimate}

We required in the previous Subsection \ref{subsection:rabi_gluing}:
\begin{lemma}
\label{lemma:estimate}
Let $\varepsilon>0$. If $\tau_{[0, \varepsilon]}$ denotes the first exit time from the interval $[0, \varepsilon]$, then:
\begin{align}
\label{eq:slicing3}
\sup_{\gamma  \geq 1}
\E_0\left( \tau_{[0, \varepsilon]} \right)
\underset{\varepsilon \rightarrow 0}{\longrightarrow} 0 \ .
\end{align}
\end{lemma}
\begin{proof}
Let $\Lc$ be the infinitesimal generator of $\theta^\gamma$. If we set for $0 \leq x < \pi$:
$$ f(x) := \E_x\left( \tau_{[0, \varepsilon]} \right) \ ,$$
then $f$ solves the Poisson equation:
$$
\left\{
\begin{array}{ccc}
\Lc f + 1 & = & 0 \ ,\\
f(\varepsilon)   & = & 0 \ .
\end{array}
\right.
$$
In explicit terms, this gives:
$$ 
  \frac{\gamma}{2} \sin^2(\theta) f''(\theta)
+ \left( \sqrt{\gamma} w - \frac{\gamma}{4} \sin(2\theta) \right) f'(\theta)
+ 1 = 0 \ .
$$
Now recall that the general term of the solution to the ODE:
$$ a y' + b y = c \ ,$$
is given by:
$$ y(x)
   =
   \exp\left( -\int_{x_0}^x \frac{b}{a} \right)
   \left[ Cste + \int_{x_0}^x du \ \frac{c(u)}{a(u)}
   \exp\left( \int_{x_0}^u \frac{b}{a} \right) \right]
   \ .
$$
Here:
$$ \exp\left( -\int_{\frac{\pi}{2}}^x \frac{b}{a} \right)
   =
   \sin(x)
   e^{ \frac{2w}{\sqrt\gamma} \cotan(x) }
   \ ,
$$
and hence:
$$
   f'(x)
   =
   \sin(x)
   e^{ \frac{2w}{\sqrt\gamma} \cotan(x) }
   \left( 
   Cste
   -
   \frac{2}{\gamma}
   \int_{\frac{\pi}{2}}^x \frac{du}{\sin^3 u} e^{-\frac{2w}{\sqrt\gamma} \cotan(u) }
   \right)
   \ .
$$

The only solution which remains bounded and integrable as $x \rightarrow 0^+$ is:
\begin{align*}
   f'(x) = &
   -
   \frac{2}{\gamma}
   \sin(x)
   e^{ \frac{2w}{\sqrt\gamma} \cotan(x) }
   \int_{0}^x \frac{du}{\sin^3 u} e^{-\frac{2w}{\sqrt\gamma} \cotan(u) }
   \ .
\end{align*}

In the end, the quantity of interest is:
$$
   \E_0\left( \tau_{[0, \varepsilon]} \right)
   =
   \frac{2}{\gamma}
   \int_0^\varepsilon du \ 
   \sin(u)
   e^{ \frac{2w}{\sqrt\gamma} \cotan(u) }
   \int_{0}^u \frac{dv}{\sin^3 v} e^{-\frac{2w}{\sqrt\gamma} \cotan(v) }
   \ .
$$

The asymptotic expansion as $\gamma \rightarrow \infty$ is readily seen after performing integrations by parts:
\begin{align*}
   \E_0\left( \tau_{[0, \varepsilon]} \right)
= &
   \frac{2}{\gamma} \frac{\sqrt{\gamma}}{2 w}
   \int_0^\varepsilon du 
   \sin(u)
   e^{ \frac{2w}{\sqrt\gamma} \cotan(u) }
   \int_{0}^u \frac{dv}{\sin v} \partial_v e^{-\frac{2w}{\sqrt\gamma} \cotan(v) }
   \\
= &
   \frac{1}{\sqrt{\gamma}w}
   \int_0^\varepsilon du 
   \sin(u)
   e^{ \frac{2w}{\sqrt\gamma} \cotan(u) }\left[\frac{1}{\sin (u)}e^{ -\frac{2w}{\sqrt\gamma} \cotan(u) }\right]   \\
  &
  -
   \frac{1}{\sqrt{\gamma}w} 
   \int_0^\varepsilon du 
   \sin(u)
   e^{ \frac{2w}{\sqrt\gamma} \cotan(u) }\int_{0}^u dv\frac{-\cos(v)}{\sin^2 v} e^{-\frac{2w}{\sqrt\gamma} \cotan(v) }  \\
= &
   \frac{\varepsilon}{\sqrt{\gamma}w}
   +
   \frac{1}{2 w^2} \int_0^\varepsilon du 
   \sin(u)
   e^{ \frac{2w}{\sqrt\gamma} \cotan(u) }
   \int_0^u dv \cos(v) \partial_v e^{ -\frac{2w}{\sqrt\gamma} \cotan(u) }\\
= &
   \frac{\varepsilon}{\sqrt{\gamma}w}
   +
   \frac{1}{2 w^2} \int_0^\varepsilon du 
   \sin(u)
   e^{ \frac{2w}{\sqrt\gamma} \cotan(u) }\left[\cos(u)e^{ -\frac{2w}{\sqrt\gamma} \cotan(u) }\right]\\
  &
  +\frac{1}{2 w^2} \int_0^\varepsilon du \sin(u)
   e^{ \frac{2w}{\sqrt\gamma} \cotan(u) }\int_{0}^u dv\sin(v) e^{-\frac{2w}{\sqrt\gamma} \cotan(v) }\\
= &
   \frac{\varepsilon}{\sqrt{\gamma}w}
   +
   \frac{1}{8 w^2} \left( 1 - \cos(2 \varepsilon) \right)\\
  &
  +\frac{1}{2 w^2} \int_0^\varepsilon du \sin(u)
   e^{ \frac{2w}{\sqrt\gamma} \cotan(u) }\int_{0}^u dv\sin(v) e^{-\frac{2w}{\sqrt\gamma} \cotan(v) }\\
\leq &
   \frac{\varepsilon}{\sqrt{\gamma}w}
   +
   \frac{1}{8 w^2} \left( 1 - \cos(2 \varepsilon) \right)
   +
   \frac{1}{2 w^2}\int_0^\varepsilon du \ u \sin^2(u) \ .
\end{align*}
The result follows.
\end{proof}

\bigskip

{\bf Acknowledgements:}
R.C. and C.P. thank Michel Bauer and Denis Bernard for fruitful conversations.

Also during the finalization of our paper, we learned that Nicolas Fournier and Camille Tardif have a similar approach for the case of Rabi oscillations.

C.B. and R.C. have been supported by the grant ANR-15-CE40-0020 LSD of the French National Research Agency (ANR).

R.C. has been supported by the grant ANR-18-CE40-0006 MESA of the French National Research Agency (ANR).

J. N. has been supported by the start up grant U100560-109 from the University of Bristol and a Focused Research Grant from the Heibronn Institute for Mathematical Research.

C.P.\ has been supported by the ANR project StoQ ANR-14-CE25-0003 and Project Markov of Labex CIMI. 

\bibliographystyle{halpha}
\bibliography{QuantumCollapse-1d.bib}

\end{document}